\documentclass[11pt,a4paper]{article}
\usepackage[latin1]{inputenc}
\usepackage{amsmath}
\usepackage{amsthm}
\usepackage{amsfonts}
\usepackage{amsfonts,amsthm,latexsym,amsmath,amssymb,amscd,epsfig,psfrag,enumerate}
\usepackage{graphics,graphicx, bezier, float, color, hyperref}
\usepackage{amssymb,url}
\usepackage{multienum}
\usepackage[table]{xcolor}
\usepackage{multicol,multirow}
\usepackage{graphicx}
\usepackage{fancyvrb}
\usepackage{tcolorbox}
\usepackage{parskip}
\usepackage[toc,page]{appendix}
\usepackage[top=2.7cm, bottom=2.7cm, left=1.5cm, right=1.5cm]{geometry}
\usepackage{xcolor}
\usepackage[none]{hyphenat}[section]
\usepackage{blkarray}
\newtheorem{theorem}{Theorem}[section]
\newtheorem{lemma}[theorem]{Lemma}

\newtheorem{remark}[theorem]{Remark}
\newtheorem{definition}[theorem]{Definition}

\numberwithin{equation}{section}
\numberwithin{table}{section}
\numberwithin{figure}{section}

\title{On generalized Thabit numbers $(p+1)p^\mathfrak{a}-1$ in the $k$-Lucas sequence}
\author{Herbert Batte$^{1,*} $, Florian Luca$^{1,2}$ and Pantelimon St\u anic\u a$^{3}$}
\date{}

\begin{document}
\maketitle
\abstract{ Let $k\ge 2$ and $\{L_n^{(k)}\}_{n\geq 2-k}$ be the sequence of $k$-Lucas numbers whose first $k$ terms are $0,\ldots,0,2,1$ and each term afterwards is the sum of the preceding $k$ terms. In this paper, we solve the Diophantine equation $L_n^{(k)}=(p+1)p^\mathfrak{a}-1$, for a Mersenne or Fermat prime $p=2^{\ell}\pm 1$, and positive integers $n\ge 2$, $k\ge 2$, $\mathfrak{a}\ge 1$ and $\ell \ge 1$.} 

\medskip

{\bf Keywords and phrases}: Generalized Thabit numbers, $k$-Lucas numbers; Mersenne primes; Fermat primes; $2$-adic valuation; Linear forms in logarithms; Dujella-Peth\H o reduction methods; Diophantine equations.

\medskip

{\bf 2020 Mathematics Subject Classification}: 11B39, 11D61, 11D45, 11J86.

\medskip

\thanks{$ ^{*} $ Corresponding author}

\section{Introduction}\label{intro}
\subsection{Background}
Recurrence sequences occupy a central place in modern Number theory, with their properties and Diophantine aspects attracting sustained research interest. The Fibonacci sequence and its companion Lucas sequence are among the most celebrated examples, and both admit natural higher-order generalizations. For an integer
$k \geq 2$, the $k$-Fibonacci sequence $\{F_n^{(k)}\}_{n\in\mathbb{Z}}$ is defined by the linear recurrence
\begin{equation*}
	F_n^{(k)} = F_{n-1}^{(k)} + F_{n-2}^{(k)} + \cdots + F_{n-k}^{(k)},
	\quad \text{for all} \ n \ge 2,
\end{equation*}
subject to the initial conditions $F_{2-k}^{(k)} = \cdots = F_{-1}^{(k)} = F_{0}^{(k)} = 0$ and $F_1^{(k)} = 1$. Setting $k = 2$ recovers the classical Fibonacci sequence, and the family $\{F_n^{(k)}\}$ has attracted considerable attention from algebraic and combinatorial perspectives alike.
The companion sequence of $k$-Lucas numbers $\{L_n^{(k)}\}_{n\in\mathbb{Z}}$ satisfies the same recurrence,
\begin{equation*}
	L_n^{(k)} = L_{n-1}^{(k)} + L_{n-2}^{(k)} + \cdots + L_{n-k}^{(k)},
	\quad \text{for all} \ n \ge 2,
\end{equation*}
but is distinguished by the initial values $L_{2-k}^{(k)} = \cdots = L_{-1}^{(k)} = 0,~L_0^{(k)}=2,~L_1^{(k)}=1$. The case $k=2$ yields the classical Lucas sequence, which has found applications across Number theory, cryptography, and primality testing. Both $k$- families exhibit a rich arithmetic structure that continues to motivate new investigations.

For a prime $p$ and an integer $\mathfrak{a}\ge 1$, the expression 
\begin{align*}
	(p+1)p^\mathfrak{a}-1,
\end{align*}
is a natural prime-base analogue of the classical Thabit-type number $3\cdot 2^\mathfrak{a}-1$. Indeed, when $p=2$, we recover the Thabit numbers $3\cdot 2^\mathfrak{a}-1$, for an integer $\mathfrak{a}\ge 1$. The identity 
\begin{align*}
	(p+1)p^\mathfrak{a}=\left[(p+1)p^\mathfrak{a}-1\right]+1,
\end{align*}
shows that one is searching for terms in a linear recurrence for which the successor has a rigid multiplicative structure ``\texttt{a prime power times its successor}". This arithmetic structure is the basic input throughout the paper.

The arithmetic study of special Number-theoretic sequences inside linear sequences has a long history. For example, Altassan and Alan proved in \cite{alta} that the only Mersenne numbers $2^\mathfrak{a}-1$, that intersect with the sequence of $k$-Lucas numbers are $1=L_1^{(k)}$, $3=L_2^{(k)}$ and $7=L_4^{(2)}$. 

In this paper, we study the Diophantine equation 
\begin{align}\label{eq:main}
	L_n^{(k)} = (p+1)p^\mathfrak{a}-1,
\end{align}
where $p=2^{\ell}\pm 1$ is a Mersenne or Fermat prime, and $n\ge 2$, $k\ge 2$, $\mathfrak{a}\ge 1$, $\ell \ge 1$ are positive integers. Note that for $n=0,1$, we have $L_n^{(k)}=2,1$ and the Diophantine equation \eqref{eq:main} has no positive solutions $p$ and $\mathfrak{a}$.

\subsection{Main Result}
We prove the following result.

\begin{theorem}
\label{thm1.1} 
Let $p=2^{\ell}\pm 1$ be a Mersenne or Fermat prime and $n\ge 2$, $k\ge 2$, $\mathfrak{a}\ge 1$, $\ell \ge 1$ be positive integers.
Then, the only solutions to the Diophantine equation 
\begin{align*}
	L_n^{(k)} = (p+1)p^\mathfrak{a}-1,
\end{align*}
are 
\begin{align*}
	(n,k,p,\mathfrak{a},\ell) \in \{(5,2,3,1,2),(7,2,5,1,2),(6,3,3,2,2)\}.
\end{align*}
\end{theorem}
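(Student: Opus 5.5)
The proof will follow the standard pattern for equations of this type: a $2$-adic step, linear forms in logarithms, then Dujella--Peth\H{o} reduction.

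\textbf{Basic facts about $L_n^{(k)}$.} I will use the following properties:
\begin{itemize}
\item the Binet-type representation $L_n^{(k)}=\sum_{i=1}^k(2\alpha_i-1)g_k(\alpha_i)\alpha_i^{n-1}$, with dominant root $\alpha=\alpha_1\in(2(1-2^{-k}),2)$ and error term $|L_n^{(k)}-(2\alpha-1)g_k(\alpha)\alpha^{n-1}|<3/2$;
\item the growth bounds $\alpha^{n-1}\le L_n^{(k)}\le 2\alpha^n$;
\item the fact that $L_n^{(k)}=3\cdot 2^{n-2}$ for $2\le n\le k$.
\end{itemize}

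\textbf{Step 1: small $n$ and the power-of-$2$ regime.} Write $p=2^\ell\pm1$. The equation reads $L_n^{(k)}+1=(p+1)p^{\mathfrak a}$. For $2\le n\le k$ this becomes $3\cdot2^{n-2}+1=(p+1)p^{\mathfrak a}$.
\begin{itemize}
\item If $p=2^\ell+1$ (so $p=3$ or a Fermat prime $2^{2^m}+1$), then $p^{\mathfrak a}\equiv 1\pmod{2^\ell}$ and the right-hand side is $\equiv 2\pmod{2^\ell}$, to be compared with the left-hand side modulo powers of $2$.
\item If $p=2^\ell-1$, then $(p+1)p^{\mathfrak a}=2^\ell p^{\mathfrak a}$. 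Comparing $2$-adic valuations with $3\cdot2^{n-2}+1$ (which is odd for $n\ge3$) forces small cases.
\end{itemize}
In both cases I will compare $2$-adic valuations, or reduce modulo $3$ or $p$, to dispose of this range quickly. For $n\ge k+1$ I intend to use the known $2$-adic behaviour of $L_n^{(k)}+1$: in the range $k+1\le n<2^{k}$ (roughly) one has the closed-type formula $L_n^{(k)}=3\cdot2^{n-2}-\sum(\ldots)$, which pins down $\nu_2(L_n^{(k)}+1)$. On the other side, $\nu_2((p+1)p^{\mathfrak a})=\ell$ when $p=2^\ell-1$, and $\nu_2=1$ when $p$ is a Fermat prime. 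This should bound $\ell$ in terms of $n$, roughly $\ell\le \log n$, or force $n$ small.

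\textbf{Step 2: linear forms in logarithms.} From $(2\alpha-1)g_k(\alpha)\alpha^{n-1}-(p+1)p^{\mathfrak a}=O(1)$, I get
\[
\Lambda_1=(p+1)p^{\mathfrak a}\alpha^{-(n-1)}\big((2\alpha-1)g_k(\alpha)\big)^{-1}-1,\qquad |\Lambda_1|\ll \alpha^{-n}.
\]
Matveev's theorem, applied with three logarithms ($p+1$ and $p$ combined via $\ell$ bounded, or a fourth term if needed) in the field $\mathbb{Q}(\alpha)$ of degree $k$, gives
\[
n<C\,k^{4}\log^2 k\,\log n\,\ell,
\]
and hence, together with Step 1's bound on $\ell$, a bound $n<C'k^{4}\log^3k$ polynomial in $k$. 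Since $p^{\mathfrak a}<L_n^{(k)}<2^{n}$, the exponent $\mathfrak a$ is also bounded.

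\textbf{Step 3: the two regimes in $k$.}
\begin{itemize}
\item \emph{Small $k$ (say $k\le 500$).} I will apply Dujella--Peth\H{o} reduction to $\log$ of $\Lambda_1$, looping over $k$ and the finitely many $\ell$ allowed (Mersenne/Fermat exponents below the bound). This reduces $n$ to a few hundred, and a direct search then recovers exactly $(5,2,3,1,2)$, $(7,2,5,1,2)$ and $(6,3,3,2,2)$.
\item \emph{Large $k$.} Here $n<C'k^4\log^3k<2^{k/2}$, so the standard approximation $\alpha^{n}=2^{n}(1+\zeta)$ with $|\zeta|<2^{-k/2}$ applies. It gives $|L_n^{(k)}-3\cdot2^{n-2}|<3\cdot2^{n-2}/2^{k/2}$, and therefore
\[
\big|3\cdot2^{n-2}-(p+1)p^{\mathfrak a}\big|/(3\cdot 2^{n-2})<2^{-k/2+1}.
\]
This is a two-term linear form in $\log 2,\log p$ (with the constant $3$ and, for Mersenne $p$, the constant $2^\ell$). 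A second application of Matveev, or Laurent's bound for two logarithms, yields $k$ bounded by an absolute constant. That constant should be below the range handled by reduction, possibly after one more reduction on this two-term form.
\end{itemize}

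The main obstacle is controlling $\ell$ (equivalently, the size of $p$) independently. Without the $2$-adic argument of Step 1, the Matveev height of $p$ carries an unbounded factor $\ell$, and the reduction cannot loop over infinitely many primes. If the $2$-adic valuation argument fails to bound $\ell$ cleanly, I would instead treat $p^{\mathfrak a}\approx 2^{\ell\mathfrak a}$ by writing $p^{\mathfrak a}=2^{\ell\mathfrak a}(1\pm2^{-\ell})^{\mathfrak a}$. This gives a linear form in $\log\alpha$ and $\log2$ alone when $\ell$ is large relative to $\log\mathfrak a$, and would let me bound $n$ without reference to $p$.
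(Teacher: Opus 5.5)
There is a genuine gap in how you control $p$. Steps 2 and 3 only work when $\log p\ll\log n$. Step 2 needs this to make $n$ polynomial in $k$, and the large-$k$ Matveev step only gives $k\ll(\log p)^2\log n$, which says nothing when $p$ is huge.

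Step 1 does not supply this bound, for two reasons.
\begin{enumerate}
\item For a Fermat prime $p\ge5$, $\nu_2\bigl((p+1)p^{\mathfrak a}\bigr)=1$ says nothing about $\ell$. The usable information is $L_n^{(k)}\equiv 1\pmod{2^{\ell}}$, and $L_n^{(k)}\equiv -1\pmod{2^{\ell}}$ in the Mersenne case.
\item More seriously, the $2$-adic information on the $L_n^{(k)}$ side only lives modulo $2^{k-1}$. Oddness forces $n=r+m(k+1)$ with $r\in\{1,2\}$, and then $L_n^{(k)}\equiv(-1)^m(4m+1)$, resp.\ $(-1)^m(4m^2+6m+3)$, modulo $2^{k-1}$ (Lemma~\ref{lem:2adic}). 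Comparing with $\pm1\pmod{2^\ell}$ yields only $2^{\min\{k-1,\ell\}}<10n^2$.
\end{enumerate}
That is a bound on $\ell$ \emph{or} on $k$, not ``$\ell\ll\log n$ or $n$ small''. It bounds $\ell$ only once you already know $10n^2<2^{k-1}$, and that is exactly what Step 2 deduces from the bound on $\ell$. So Steps 1--2 are circular, and the regime where $n$ is exponentially large in $k$ and $p$ is large is never treated.

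Your fallback does not close this as stated. The form coming from $p^{\mathfrak a}=2^{\ell\mathfrak a}(1\pm2^{-\ell})^{\mathfrak a}$ is $\bigl|f_k(\alpha)(2\alpha-1)\alpha^{n-1}2^{-\ell(\mathfrak a+1)}-1\bigr|\ll n/2^{\ell}$. Matveev applied to it bounds $\log p$, by $\ll k^4(\log k)^2\log n$; it does not bound $n$.

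The missing idea is the paper's case split according to whether $p<n^{10}$ or $p>n^{10}$.
\begin{itemize}
\item If $p<n^{10}$, then $\log p<10\log n$ and your Steps 2--3 go through essentially as written. This is the paper's treatment via $\Gamma_1$, $\Gamma_2$ and LLL.
\item If $p>n^{10}$, then $2^{\ell}\ge p-1>10n^2$, so the dichotomy forces $2^{k-1}<10n^2$, i.e.\ $k<10\log n$. Here the $2$-adic step bounds $k$, not $\ell$. Only now does your fallback form finish the job: it gives $\log p\ll k^4(\log k)^2\log n\ll(\log n)^7$. Inserting this into the $\Gamma_1$ bound $n\ll k^4(\log k)^3(\log p)^3$ bounds $n$, and hence $k$, absolutely.
\end{itemize}
Alternatively, the fallback plus the $\Gamma_1$ bound makes $n$ polynomial in $k$ in all cases, after which the dichotomy does force $2^{\ell}<10n^2$ for large $k$. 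Either way, the dichotomy and a split on the size of $p$ must be made explicit.

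Finally, before you can ``loop over the finitely many $\ell$'', you need a reduction that bounds $\ell$ independently of $p$. Since the fallback form does not involve $p$, an LLL or Dujella--Peth\H{o} reduction on it for each $k$ in range gives a small explicit bound ($\ell\le1760$ in the paper). Without it, the admissible exponents form an astronomically large range in which the Mersenne and Fermat primes cannot be enumerated.
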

These solutions in Theorem \ref{thm1.1} are summarized in Table \ref{tab:solutions} below.

\begin{table}[H]
	\centering
	\renewcommand{\arraystretch}{1.3}
	\begin{tabular}{ccccccc}
		\hline
		$n$ & $k$ & $p$ & $\mathfrak{a}$ & $\ell$ & Type & $L_n^{(k)}$ \\
		\hline
		$5$ & $2$ & $3=2^2-1=2^1+1$ & $1$ & $2$ & Mersenne/Fermat & $11$ \\
		$6$ & $3$ & $3=2^2-1=2^1+1$ & $2$ & $2$ & Mersenne/Fermat & $35$ \\
		$7$ & $2$ & $5=2^2+1$        & $1$ & $2$ & Fermat          & $29$ \\
		\hline
	\end{tabular}
	\caption{All solutions $(n,k,p,\mathfrak{a},\ell)$ to \eqref{eq:main}
		with $p=2^{\ell}\pm 1$ a Mersenne or Fermat prime.}
	\label{tab:solutions}
\end{table}
\section{Some useful formulas and inequalities involving $L_n^{(k)}$}

In this section we collect some useful formulas and inequalities concerning $k$-generalized Lucas numbers. First, it is known that the formula
\begin{align*}
	L_n^{(k)} = 3 \cdot 2^{n-2}\quad {\text{\rm holds~for}}\quad 2\le n\le k.
\end{align*}
Furthermore, 
\[
L_{k+1}^{(k)} = 3 \cdot 2^{k-1} - 2.
\]
Using induction, one proves easily that the inequality
\begin{equation}
\label{eq:3times2}
	L_n^{(k)} \le 3 \cdot 2^{n-2}-2\qquad {\text{\rm holds~for~all}}\qquad n\ge k+1.
\end{equation}
The $k$-generalized Lucas numbers can be expressed in terms of the $k$-generalized Fibonacci numbers via the formula
\begin{equation}
\label{eq:FL}
L_n^{(k)}=2F_{n+1}^{(k)}-F_n^{(k)}.
\end{equation}
To prove the above formula, it is enough to first notice that it holds for $n=-(k-2),\ldots,0,1,2$ and then to invoke the fact that both sequences $\{L_n^{(k)}\}_{n\in {\mathbb Z}}$ and $\{2F_{n+1}^{(k)}-F_n^{(k)}\}_{n\in {\mathbb Z}}$ 
satisfy the same $k$th order linear recurrence to conclude that the above formula holds for all $n\in {\mathbb Z}$. Another useful formula is 
$$
L_{n}^{(k)}=2L_{n-1}^{(k)}-L_{n-(k+1)}^{(k)}
$$
for all $n\in {\mathbb Z}$ (to see why it holds, replace the left-hand side above by $L_{n-1}^{(k)}+L_{n-2}^{(k)}+\cdots+L_{n-k}^{(k)}$, cancel a term $L_{n-1}^{(k)}$ on both sides of the resulting equation, and recognise that 
the resulting formula is just the formula representing $L_{n-1}^{(k)}$ as the sum of the previous $k$ terms of the sequence).  Reducing the above formula modulo $2$ we get
$$
L_n^{(k)}\equiv L_{n-(k+1)}^{(k)} \pmod 2,
$$
showing that $\{L_n^{(k)}\}_{n\in {\mathbb Z}}$ is periodic modulo $2$ with period $k+1$. This fact is so useful that we record it as a lemma.

\begin{lemma}
\label{lem:1}
The sequence $\{L_n^{(k)}\}_{n\in {\mathbb Z}}$ is periodic modulo $2$ with period $k+1$.
\end{lemma}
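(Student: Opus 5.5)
The plan is to derive the lemma from the identity $L_n^{(k)}=2L_{n-1}^{(k)}-L_{n-(k+1)}^{(k)}$, valid for all $n\in\mathbb{Z}$. I first check carefully that this identity really holds on all of $\mathbb{Z}$. The recurrence $L_n^{(k)}=\sum_{j=1}^{k}L_{n-j}^{(k)}$ holds for every integer $n$: the sequence is extended to negative indices by running the recurrence backwards, which is possible because the coefficient of $L_{n-k}^{(k)}$ is $1$. Writing the recurrence at $n$ and at $n-1$ and subtracting gives
\begin{align*}
L_n^{(k)}-L_{n-1}^{(k)}=L_{n-1}^{(k)}-L_{n-(k+1)}^{(k)},
\end{align*}
which is the stated identity.

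Next I reduce the identity modulo $2$. The term $2L_{n-1}^{(k)}$ vanishes, and $-1\equiv 1 \pmod 2$, so
\begin{align*}
L_n^{(k)}\equiv L_{n-(k+1)}^{(k)} \pmod 2 \qquad \text{for all } n\in\mathbb{Z}.
\end{align*}
This says exactly that the sequence of residues modulo $2$ is periodic with period $k+1$. By induction in both directions, $L_{n+m(k+1)}^{(k)}\equiv L_n^{(k)} \pmod 2$ for all $m\in\mathbb{Z}$.

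If one wants $k+1$ to be the minimal period, I would inspect one full block. The block $n=2-k,\ldots,1$ has residues $0,\ldots,0,0,1$, namely $k-2$ zeros coming from the indices $2-k,\ldots,-1$, then $L_0^{(k)}=2\equiv 0$, then $L_1^{(k)}=1$. So each block of $k+1$ consecutive terms, taken from $n=1-k$ to $n=1$, contains exactly one odd term. Hence no proper divisor of $k+1$ can be a period, since a smaller period would force a second odd term within the block.

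There is no real obstacle here. The only point needing care is confirming that the recurrence, and therefore the identity, holds for negative indices, which follows from the backward extension described above.
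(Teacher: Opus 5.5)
Your core argument is correct and is exactly the paper's. Subtracting the recurrence at $n-1$ from the one at $n$ gives $L_n^{(k)}=2L_{n-1}^{(k)}-L_{n-(k+1)}^{(k)}$ for all $n\in\mathbb{Z}$, and reducing modulo $2$ proves the lemma, which only asserts that $k+1$ is \emph{a} period.

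Your optional minimality paragraph, however, contains a false claim. The block $n=2-k,\ldots,1$ has only $k$ terms. When you pass to the genuine block $n=1-k,\ldots,1$, you never compute $L_{1-k}^{(k)}$. The recurrence at $n=1$ reads $L_1^{(k)}=L_0^{(k)}+L_{-1}^{(k)}+\cdots+L_{1-k}^{(k)}$. Together with the vanishing of the $k-2$ initial terms $L_{2-k}^{(k)},\ldots,L_{-1}^{(k)}$, this gives $L_{1-k}^{(k)}=1-2=-1$, which is odd. So every period contains \emph{two} odd terms, not one. Equivalently, from $L_0^{(k)}=2$, $L_1^{(k)}=1$ and $L_n^{(k)}=3\cdot 2^{n-2}$ for $2\le n\le k$, the residues over one period are $0,1,1,0,\ldots,0$. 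Thus $L_n^{(k)}$ is odd exactly when $n\equiv 1,2\pmod{k+1}$, which is precisely what the paper uses later in the sub-case $p>n^{10}$. Your argument that a smaller period ``would force a second odd term'' therefore proves nothing.

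Minimality does hold for $k\ge 2$, but argue it through the adjacency of the two odd positions. Suppose $1\le d\le k$ were a period. Then $L_{1+d}^{(k)}\equiv L_1^{(k)}\equiv 1\pmod 2$ forces $1+d\equiv 2\pmod{k+1}$, since $d\not\equiv 0$. Hence $d=1$, contradicting $L_0^{(k)}\not\equiv L_1^{(k)}\pmod 2$.
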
 

Now let us recall some more formulas related to the sequence of $k$-generalized Lucas numbers. Its characteristic  polynomial
$$
g_k(X) = X^k - X^{k-1} - \cdots - X - 1,
$$
is irreducible in $\mathbb{Q}[X]$. This polynomial has exactly one real root greater than 1, denoted by $\alpha := \alpha(k)$, and all its other roots lie inside the unit circle in the complex plane. Further, this real root lies within the range
\begin{align}\label{eq2.3}
	2\left(1 - 2^{-k} \right) < \alpha < 2,\qquad {\text{\rm for~all}}\qquad k\ge 2.
\end{align}
If we need to refer to all the roots of $g_k(X)$, then we label them as $\alpha_1,\ldots,\alpha_k$ with the convention that $\alpha:=\alpha_1$. For all $n \geq 0$ and $k \geq 2$, it was proved in \cite{BRL} that the $k$-generalized Lucas numbers are bounded by powers of $\alpha$ as
\begin{align}\label{eq2.4}
	\alpha^{n-1} \leq L_n^{(k)} \leq 2\alpha^n.
\end{align}
Moreover, if we compare \eqref{eq:main} and \eqref{eq2.4}, we deduce that
\begin{align*}
	p^2 \le p^{\mathfrak{a}+1}=p\cdot  p^\mathfrak{a}<(p+1)p^\mathfrak{a}=1+ L_n^{(k)} \leq 3\alpha^n,
\end{align*}
which implies that
\begin{align}\label{a-bound}
	p < \sqrt3 \alpha^{n/2} \qquad \text{and}\qquad \mathfrak{a} \le n.
\end{align}

Next, we define the function
\begin{equation*}
	f_k(x) := \frac{x - 1}{2 + (k + 1)(x - 2)},\qquad x\in {\mathbb C}\backslash \left\{2-{2}/{(k+1)}\right\}.
\end{equation*}
We have 
$$
\frac{df_k(x)}{dx}=-\frac{(k-1)}{(2+(k+1)(x-2))^2}<0,\qquad {\text{\rm for all}}\qquad x>0.
$$
In particular, inequality \eqref{eq2.3} implies that
\begin{align}\label{eq2.5}
	\dfrac{1}{2}=f_k(2)<f_k(\alpha)<f_k(2(1 - 2^{-k} ))\le \dfrac{3}{4},
\end{align}
for all $k\ge 3$. It is easy to check that the above inequality holds for $k=2$ as well. Further, it is easy to verify that $|f_k(\alpha_i)|<1$, for all $2\le i\le k$, where $\alpha_i$ are the remaining 
roots of $\Psi_k(x)$ for $i=2,\ldots,k$.

The Binet formula for the general term of the $k$-Lucas numbers is given by
\begin{equation*}
L_n^{(k)} = \sum_{i=1}^k (2\alpha_i - 1)f_k(\alpha_i)\alpha_i^{n - 1},\qquad {\text{\rm for all}}\quad n\in {\mathbb Z}.
\end{equation*}
Since $\alpha_2,\ldots,\alpha_k$ are inside the unit circle, one would say that only the first term $(2\alpha_1-1)f_k(\alpha_1)\alpha_1^{n-1}$ already approximates $L_n^{(k)}$ and indeed the approximation
\begin{align}\label{lk_b}
\left|L_n^{(k)} - f_k(\alpha)(2\alpha - 1)\alpha^{n - 1}\right| < \frac{3}{2},\qquad {\text{\rm for~all}}\quad n\ge 2-k
\end{align}
appears, for example, in \cite{BRL}. This tells us that most of the size of $L_n^{(k)}$ comes from the dominant term involving the real root $\alpha$, while the other terms contribute very little. A better estimate than \eqref{lk_b} appears in Section  3.3 page 14 of \cite{bat}, but with a more restricted range of $n$ in terms of $k$. It states that 
\begin{align}\label{lk_b1}
	\left| f_k(\alpha)(2\alpha - 1)\alpha^{n - 1}-3\cdot 2^{n-2}\right| < 3\cdot 2^{n-2}\cdot \frac{36}{2^{k/2}}\qquad {\text{\rm provided}}\qquad n<2^{k/2}.
\end{align}
Another useful combinatorial formula for the $k$th generalized Fibonacci numbers is  due to Cooper and Howard in \cite{cop} and states that
\begin{align}\label{comb}
F_n^{(k)} = 2^{n - 2} + \sum_{j = 1}^{\left\lfloor \frac{n + k}{k + 1} \right\rfloor - 1} C_{n, j} \, 2^{n - j(k+1) - 2}, \quad \text{where} \quad 
C_{n,j} := (-1)^j \left( \binom{n - jk}{j} - \binom{n - jk - 2}{j - 2} \right).
\end{align}
In Equation \eqref{comb}, we make the convention that $\displaystyle\binom{n}{m}=0$ if either $n<m$ or one of $n$ or $m$ is negative.
Via equation \eqref{eq:FL}, it yields the following formula for the $k$th generalized Lucas number:
 \begin{align}\label{2adic2}
	L_n^{(k)} &= 2F_{n+1}^{(k)}-F_n^{(k)}\nonumber\\
	&= 2\left(2^{n-1}+
	\sum_{j = 1}^{\left\lfloor \frac{n +1+ k}{k + 1} \right\rfloor - 1} C_{n+1, j} \, 2^{n - j(k+1) - 1}\right)-
	2^{n - 2} - \sum_{j = 1}^{\left\lfloor \frac{n + k}{k + 1} \right\rfloor - 1} C_{n, j} \, 2^{n - j(k+1) - 2}\nonumber\\
	&= 3\cdot2^{n-2}+
	\sum_{j = 1}^{\left\lfloor \frac{n +1+ k}{k + 1} \right\rfloor - 1}4 C_{n+1, j} \, 2^{n - j(k+1) -2}
	- \sum_{j = 1}^{\left\lfloor \frac{n + k}{k + 1} \right\rfloor - 1} C_{n, j} \, 2^{n - j(k+1) - 2}.
\end{align}

Lastly here, we recall one additional simple fact from calculus. If $x\in \mathbb{R}$ satisfies $|x|<1/2$, then 
\begin{align}\label{eq2.5g}
	|\log(1+x)|&<|x-x^2/2+-\dots|
	<|x|+\frac{|x|^2+|x|^3+\dots}2
	<|x|\left(1+\frac{|x|}{2(1-|x|)}\right)<\frac 32 |x|.
\end{align}
In a similar way, we obtain the lower bound $|\log(1+x)|>\frac 12 |x|$ provided that $|x|<1/2$. We shall use these inequalities later in the proof of the main result.

\section{The $2$-adic valuation of $L_n^{(k)}$}

Since $\{L_n^{(k)}\}_{n\in {\mathbb Z}}$ is periodic modulo $2$ with period $k+1$ (see Lemma \ref{lem:1}), it follows that the $2$-adic valuation of $L_n^{(k)}$ depends on the residue of $n$ modulo $k+1$. So, let us write 
$n=r+m(k+1)$ for some integers $m\ge 0$ and $r\in \{0,1,\ldots,k\}$. 

\begin{lemma}
\label{lem:2adic}
Let $k\ge 2$ and  $n=r+m(k+1)$ with integers $m\ge 0$ and $r\in \{0,1,\ldots,k\}$. The following congruences hold.
\begin{enumerate}[\upshape(i)]
\item If $r=0$, then 
\begin{equation}
\label{eq:r=0}
L_n^{(k)}\equiv 2(-1)^m \pmod {2^{k-2}}.
\end{equation}
\item If $r=1$, then 
\begin{equation}
\label{eq:r=1}
L_n^{(k)}\equiv (4m+1)(-1)^m\pmod {2^{k-1}}.
\end{equation}
\item If $r=2$, then 
\begin{equation}
\label{eq:r=2}
L_n^{(k)}\equiv (4m^2+6m+3)(-1)^m\pmod {2^{k}}.
\end{equation}
\item If $r\ge 3$, then 
\begin{align}
\label{eq:rge3}
L_n^{(k)}\equiv &(-1)^m 2^{r-2} \left(4\left(\binom{m+r+1}{m}-\binom{m+r-1}{m-2}\right)-\left(\binom{m+r}{m}-\binom{m+r-2}{m-2}\right)\right)\nonumber\\
&\hspace*{10cm}\pmod {2^{k+r-2}}.
\end{align}
\end{enumerate}
\end{lemma}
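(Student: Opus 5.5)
The plan is to work directly from the Cooper--Howard expansion \eqref{2adic2}. With $n=r+m(k+1)$, each summand there is an integer times a power $2^{n-j(k+1)-2}=2^{r+(m-j)(k+1)-2}$. All summands with $m-j\ge 1$ carry a factor $2^{r+k-1}$. That is at least the modulus in each case: $2^{k-2}$ for $r=0$, $2^{k-1}$ for $r=1$, $2^{k}$ for $r=2$, and $2^{k+r-2}$ for $r\ge3$. So modulo the stated power of $2$, only the terms with index $j=m$ survive (and, when it lies in range, the term $j=m+1$ for small $r$), together with the leading term $3\cdot 2^{n-2}$ when $m=0$.

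The first step is to pin down the summation ranges. For the $C_{n+1,j}$ sum the upper limit is $\lfloor (n+1+k)/(k+1)\rfloor-1=m+\lfloor (r+1+k)/(k+1)\rfloor-1$. This equals $m$ if $r\ge 0$, except that for $r=k$ it is $m+1$; the case $r=k$ only enters case (iv) and needs a small check. For the $C_{n,j}$ sum the upper limit is $m-1$ if $r=0$ and $m$ if $r\ge1$. One must also check that the leading term $3\cdot2^{n-2}$ is divisible by the modulus once $m\ge1$, since $n-2=r+m(k+1)-2\ge r+k-1$. For $m=0$ the formulas should be verified directly from $L_r^{(k)}=3\cdot 2^{r-2}$ for $2\le r\le k$, with $L_0=2$ and $L_1=1$. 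In fact they are designed to reproduce these values, e.g.\ for $r\ge3$, $m=0$ the bracket is $4\cdot1-1=3$.

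Next comes the evaluation of the surviving coefficients. For $j=m$ we have $n+1-mk=m+r+1$ and $n-mk=m+r$, so
$C_{n+1,m}=(-1)^m\bigl(\binom{m+r+1}{m}-\binom{m+r-1}{m-2}\bigr)$ and
$C_{n,m}=(-1)^m\bigl(\binom{m+r}{m}-\binom{m+r-2}{m-2}\bigr)$, with power $2^{r-2}$. Combining the two sums gives exactly the expression in \eqref{eq:rge3} for $r\ge3$.

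For $r=0,1,2$ the exponent $r-2$ is negative, so the surviving terms are rational-looking. This is where the main obstacle lies: the $j=m$ terms must be combined (and, for $r=0$, the $C_{n+1}$ sum stops at $j=m$ while the $C_n$ sum stops at $m-1$) and the binomials simplified. For $r=0$ the only term with small exponent is $4C_{n+1,m}2^{-2}=C_{m+1\ldots}$; here $\binom{m+1}{m}-\binom{m-1}{m-2}=2$, giving $2(-1)^m$. The next term, $j=m-1$, carries $2^{k-1}$, which is consistent with the weaker modulus $2^{k-2}$ (the $C_{n,m-1}$ term has exponent $k-1$). For $r=1$, one computes $4C\cdot2^{-1}-C\cdot2^{-1}$ with $\binom{m+2}{m}-\binom{m}{m-2}=2m+1$ and $\binom{m+1}{m}-\binom{m-1}{m-2}=2$, giving $(-1)^m(2(2m+1)-1)=(4m+1)(-1)^m$. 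For $r=2$, using $\binom{m+3}{3}$-type identities, $\binom{m+3}{m}-\binom{m+1}{m-2}=(m+1)^2$ and $\binom{m+2}{m}-\binom{m}{m-2}=2m+1$, so $4(m+1)^2-(2m+1)=4m^2+6m+3$.

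The remaining care, which I expect to be the fiddly part, is confirming that every neglected term ($j\le m-1$ in both sums, plus any $j=m+1$ boundary term when $r=k$) is divisible by the stated modulus. These terms are integers times $2^{r+k-1}$ or higher, and for $r=0$ the $C_n$ sum contributes at $2^{k-1}$, which explains the modulus $2^{k-2}$ there.
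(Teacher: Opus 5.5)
Your proposal is correct and follows the paper's proof essentially step for step. You reduce \eqref{2adic2} modulo the stated power of $2$, note that every summand with $j\le m-1$ (and the leading term $3\cdot 2^{n-2}$ when $m\ge 1$) carries at least $2^{k+r-1}$, and evaluate the surviving $j=m$ binomial differences ($2$, $2m+1$, $(m+1)^2$) to obtain (i)--(iv), checking $m=0$ directly. One small slip: there is no exceptional case $r=k$, since $r+1+k\le 2k+1<2(k+1)$ forces the first sum to end at $j=m$ for every $r$. So no $j=m+1$ term ever appears, which is fortunate, as such a term would carry the negative power $2^{r-k-3}$.
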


\begin{proof}
Note that 
$$
\left\lfloor \frac{n+1+k}{k+1}\right\rfloor=\left\lfloor \frac{r+(m+1)(k+1)}{k+1}\right\rfloor=m+1,
$$
while
$$
\left\lfloor \frac{n+k}{k+1}\right\rfloor=\left\lfloor \frac{(r+k)+m(k+1)}{k+1}\right\rfloor=\left\{\begin{matrix} m+1 & {\text{\rm if}} & r\ge 1,\\ m & {\text{\rm if}} & r=0. \end{matrix}\right.
$$
Furthermore,  for $j$ in the summation ranges in \eqref{2adic2}, we have 
$$
n-j(k+1)-2=(r-2)+(m-j)(k+1)\ge k+r-2\qquad {\text{\rm except~when}}\qquad j=m.
$$
The case $j=m$ happens in the last term of the first summation in \eqref{2adic2} and also in the last term of the second summation but only when $r\ge 1$. So, when $r=0$, assuming also that 
$m\ge 1$ (otherwise $n=0$ so $L_n^{(k)}=L_0^{(k)}=2$ and congruence \eqref{eq:r=0} holds), we have that 
\begin{eqnarray*}
L_n^{(k)} & \equiv &  2^{0-2}\cdot 4 C_{n+1,m} \pmod {2^{k+0-2}}\\
&\equiv& (-1)^m \left(\binom{n+1-mk}{m}-\binom{n+1-mk-2}{m-2}\right)\pmod {2^{k-2}}\\
& \equiv &  (-1)^m \left(\binom{1+m(k+1)-mk}{m}-\binom{1+m(k+1)-mk-2}{m-2}\right)\pmod {2^{k-2}}\\
& \equiv &  (-1)^m \left(\binom{m+1}{m}-\binom{m-1}{m-2}\right) \equiv  2(-1)^m \pmod {2^{k-2}}.
\end{eqnarray*}
This proves (i). From now on, $r\ge 1$ so both sums appearing in \eqref{2adic2} have the same length $m$. It follows that 
$$
L_n^{(k)}\equiv 2^{r-2}\left(4C_{n+1,m}-C_{n,m}\right)\pmod {2^{k+r-2}}.
$$
We treat the cases $r=1,~r=2$ and $r\ge 3$ separately. For $r=1$, if $m=0$, then $L_n^{(k)}=L_1^{(k)}=1$ and congruence \eqref{eq:r=1} holds. For $m\ge 1$, we get
\begin{eqnarray*}
L_n^{(k)} & \equiv & 2(-1)^m \left(\binom{n+1-mk}{m}-\binom{n-1-mk}{m}\right)-2^{-1}(-1)^m \left(\binom{n-mk}{m}-\binom{n-2-mk}{m-2}\right)\\
&~&\hfill\pmod {2^{k-1}}\\
& \equiv &  
2(-1)^m \left(\binom{2+m(k+1)-mk}{m}-\binom{m(k+1)-mk}{m-2}\right)\\ 
& ~& \qquad-2^{-1}(-1)^m \left(\binom{1+m(k+1)-mk}{m}-\binom{-1+m(k+1)-mk}{m-2}\right)\pmod {2^{k-1}}\\
& \equiv &  
2(-1)^m \left(\binom{m+2}{m}-\binom{m}{m-2}\right)-2^{-1}(-1)^m\left(\binom{m+1}{m}-\binom{m-1}{m-2}\right)\pmod {2^{k-1}}.\\
\end{eqnarray*}
When $m=1$, we get 
$$
-2\binom{3}{1}+2^{-1}\binom{2}{1}\equiv -5\pmod {2^{k-1}}\equiv (-1)^1(4\cdot 1+1)\pmod {2^{k-1}},
$$
while when $m\ge 2$ we get
\begin{eqnarray*}
L_n^{(k)} & \equiv &  2(-1)^m \left(\binom{m+2}{2}-\binom{m}{2}\right)-2^{-1}(-1)^m\left(\binom{m+1}{1}-\binom{m-1}{1}\right)\pmod {2^{k-1}}\\
& \equiv &   2(-1)^m \left(\frac{(m+2)(m+1)}{2}-\frac{m(m-1)}{2}\right)-(-1)^m\pmod {2^{k-1}}\equiv (-1)^m(4m+1)\\
&~&\hfill\pmod {2^{k-1}}.
\end{eqnarray*}
This takes care of the congruence \eqref{eq:r=1} in (ii). For (iii), let $r=2$. When $m=0$, we have $n=2$, so $L_n^{(k)}=3=(-1)^0(4\cdot 0^2+6\cdot 0+3)$. When $m\ge 1$, we get
\begin{eqnarray*}
L_n^{(k)} & \equiv & 2^2(-1)^m \left(\binom{n+1-mk}{m}-\binom{n-1-mk}{m}\right)-2^{-1}(-1)^m \left(\binom{n-mk}{m}-\binom{n-2-mk}{m-2}\right)\\
&~&\hfill\pmod {2^{k}}\\
& \equiv &  
2(-1)^m \left(\binom{3+m(k+1)-mk}{m}-\binom{1+m(k+1)-mk}{m-2}\right)\\ 
& -& (-1)^m \left(\binom{2+m(k+1)-mk}{m}-\binom{m(k+1)-mk}{m-2}\right)\pmod {2^{k}}\\
& \equiv &  
4(-1)^m \left(\binom{m+3}{m}-\binom{m+1}{m-2}\right)-(-1)^m\left(\binom{m+2}{m}-\binom{m}{m-2}\right)\pmod {2^{k}}.\\
\end{eqnarray*}
When $m=1$, we get 
$$
-4\binom{4}{1}+\binom{3}{1}\equiv -13\pmod {2^{k-1}}\equiv (-1)^1(4\cdot 1^2+6\cdot 1+3)\pmod {2^{k}},
$$
while when $m\ge 2$ we get
\begin{eqnarray*}
L_n^{(k)} & \equiv &  4(-1)^m \left(\binom{m+3}{3}-\binom{m+1}{3}\right)-(-1)^m\left(\binom{m+2}{2}-\binom{m}{2}\right)\pmod {2^{k}}\\
& \equiv &  
(-1)^m(4m^2+6m+3)\pmod {2^{k}},
\end{eqnarray*}
which proves \eqref{eq:r=2}.
Part (iv) is similar, that is, let $r\ge 3$ and $m\ge 0$. Then \eqref{2adic2} gives
\begin{eqnarray*}
	L_n^{(k)} & = & 3\cdot2^{n-2}+
	\sum_{j = 1}^{m}4 C_{n+1, j} \, 2^{n - j(k+1) -2}
	- \sum_{j = 1}^{m} C_{n, j} \, 2^{n - j(k+1) - 2}\\
	& = & 3\cdot2^{n-2}+
	\sum_{j = 1}^{m}\left(4 C_{n+1, j}-C_{n, j}\right) \, 2^{n - j(k+1) -2}\\
	&\equiv & (-1)^m 2^{r-2} \left(4\left(\binom{m+r+1}{m}-\binom{m+r-1}{m-2}\right)-\left(\binom{m+r}{m}-\binom{m+r-2}{m-2}\right)\right)\\
	&~&\hfill\pmod {2^{k+r-2}},
\end{eqnarray*}
which is \eqref{eq:rge3}.
\end{proof}

\section{Methods}
\subsection{Linear forms in logarithms}
To estimate how small a nonzero linear combination of logarithms of algebraic numbers can be, we apply a result known as a Baker-type lower bound. While several such bounds are known in the literature, we make use of a version due to Matveev, as presented in \cite{matl}. Before we can formulate such inequalities we need the notion of height of an algebraic number recalled below.

\begin{definition}
	Let $ \gamma $ be an algebraic number of degree $ d $ with minimal primitive polynomial over the integers $$ a_{0}x^{d}+a_{1}x^{d-1}+\cdots+a_{d}=a_{0}\prod_{i=1}^{d}(x-\gamma^{(i)}), $$ where the leading coefficient $ a_{0} $ is positive. Then, the logarithmic height of $ \gamma$ is given by $$ h(\gamma):= \dfrac{1}{d}\Big(\log a_{0}+\sum_{i=1}^{d}\log \max\{|\gamma^{(i)}|,1\} \Big). $$
\end{definition}
In particular, if $ \gamma$ is a rational number represented as $\gamma:=p/q$ with coprime integers $p$ and $ q\ge 1$, then $ h(\gamma ) = \log \max\{|p|, q\} $. 
The following properties of the logarithmic height function $ h(\cdot) $ will be used in the rest of the paper without further reference:
\begin{equation}\nonumber
	\begin{aligned}
		h(\gamma_{1}\pm\gamma_{2}) &\leq h(\gamma_{1})+h(\gamma_{2})+\log 2;\\
		h(\gamma_{1}\gamma_{2}^{\pm 1} ) &\leq h(\gamma_{1})+h(\gamma_{2});\\
		h(\gamma^{s}) &= |s|h(\gamma)  \quad {\text{\rm valid for}}\quad s\in \mathbb{Z}.
	\end{aligned}
\end{equation}
With these properties, it was easily computed in Section 3 of \cite{Brl} that
\begin{align}\label{eqh}
	h\left(f_k(\alpha)\right)<3\log k, \qquad \text{for all}\qquad k\ge 2.
\end{align}

A linear form in logarithms is an expression
\begin{equation*}
	\Lambda:=b_1\log \gamma_1+\cdots+b_t\log \gamma_t,
\end{equation*}
where for us $\gamma_1,\ldots,\gamma_t$ are positive real  algebraic numbers and $b_1,\ldots,b_t$ are nonzero integers. We assume, $\Lambda\ne 0$. We need lower bounds 
for $|\Lambda|$. We write ${\mathbb K}:={\mathbb Q}(\gamma_1,\ldots,\gamma_t)$ and $D$ for the degree of ${\mathbb K}$.
We start with the general form due to Matveev, see Theorem 9.4 in \cite{matl}. 

\begin{theorem}[Matveev, see Theorem 9.4 in \cite{matl}]
	\label{thm:Matl} 
	Put $\Gamma:=\gamma_1^{b_1}\cdots \gamma_t^{b_t}-1=e^{\Lambda}-1$. Assume $\Gamma\ne 0$. Then 
	$$
	\log |\Gamma|>-1.4\cdot 30^{t+3}\cdot t^{4.5} \cdot D^2 (1+\log D)(1+\log B)A_1\cdots A_t,
	$$
	where $B\ge \max\{|b_1|,\ldots,|b_t|\}$ and $A_i\ge \max\{Dh(\gamma_i),|\log \gamma_i|,0.16\}$ for $i=1,\ldots,t$.
\end{theorem}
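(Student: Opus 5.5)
The statement just above is Matveev's theorem, quoted from Theorem 9.4 of \cite{matl}. In this paper it is an imported tool, not something proved here. So I will not reprove it from scratch. Instead, here is how the argument behind it goes, and what is needed to apply it in the form stated.

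\textbf{Reduction to Matveev's main estimate.} Matveev's main result is a lower bound for $|\Lambda|$, where $\Lambda=b_1\log\gamma_1+\cdots+b_t\log\gamma_t\neq 0$. It has the shape
\[
\log|\Lambda|>-C(t,D)\,(1+\log B)A_1\cdots A_t .
\]
The proof has three stages.

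\begin{enumerate}
\item \textbf{Auxiliary function.} Build an auxiliary polynomial in the $\gamma_i$ with integer coefficients of controlled size, using Siegel's lemma.
\item \textbf{Extrapolation.} Show the polynomial vanishes to high order at many integer points, using Baker's method with Kummer descent.
\item \textbf{Contradiction.} Obtain a contradiction from a zero estimate, namely Philippon's multiplicity estimate.
\end{enumerate}

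Matveev's refinement is to keep the dependence on $t$ only exponential. This gives the constant $1.4\cdot 30^{t+3}t^{4.5}D^2(1+\log D)$ in the real case, where all $\gamma_i$ are real and positive, as assumed here.

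\textbf{Passing from $\Lambda$ to $\Gamma$.} To get the stated form for $\Gamma=e^{\Lambda}-1$, I would split into two cases.

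\begin{itemize}
\item If $|\Gamma|\ge 1/2$, the inequality is trivial, since the right-hand side is a large negative number.
\item If $|\Gamma|<1/2$, then $|\Lambda|=|\log(1+\Gamma)|<\tfrac32|\Gamma|$ by \eqref{eq2.5g}. Hence $\log|\Gamma|>\log|\Lambda|-\log(3/2)$. The constant $\log(3/2)$ is absorbed because Matveev's constant leaves ample slack.
\end{itemize}

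Note that $\Gamma\neq0$ is equivalent to $\Lambda\neq0$ when the $\gamma_i$ are positive reals.

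\textbf{Main obstacle.} The hard step is the zero estimate together with the careful bookkeeping that yields explicit constants. For the purposes of this paper, these are taken as given from \cite{matl}.
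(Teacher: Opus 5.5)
Your proposal matches what the paper does. The bound is imported verbatim from \cite{matl}, which restates Matveev's estimate in the multiplicative form, and the paper gives no proof of it, so citing it is exactly the paper's approach.

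One small correction to your sketch: the displayed constant $1.4\cdot 30^{t+3}t^{4.5}D^2(1+\log D)$ is already Matveev's constant for the multiplicative form $\Gamma$ (his Corollary~2.3). So the passage from $\Lambda$ to $\Gamma$, and the absorption of $\log(3/2)$, happen inside Matveev's own derivation from his sharper additive theorem. They are not an extra step performed on top of this constant.
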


\subsection{Reduction methods}
Typically, the estimates from Matveev's theorem are excessively large to be practical in computations. To refine these estimates, we use a method based on the LLL-algorithm. While the Baker-Davenport reduction is often sufficient for linear forms in two or three logarithms, the LLL-algorithm offers a more robust and generalized approach for three or more logarithms, as is the case in our study (see \cite{SMA, Weg}). We next explain this method.

Let $k$ be a positive integer. A subset $\mathcal{L}$ of the $k$-dimensional real vector space ${ \mathbb{R}^k}$ is called a lattice if there exists a basis $\{b_1, b_2, \ldots, b_k \}$ of $\mathbb{R}^k$ such that
\begin{align*}
	\mathcal{L} = \sum_{i=1}^{k} \mathbb{Z} b_i = \left\{ \sum_{i=1}^{k} r_i b_i \mid r_i \in \mathbb{Z} \right\}.
\end{align*}
We say that $b_1, b_2, \ldots, b_k$ form a basis for $\mathcal{L}$, or that they span $\mathcal{L}$. We
call $k$ the rank of $ \mathcal{L}$. The determinant $\text{det}(\mathcal{L})$, of $\mathcal{L}$ is defined by
\begin{align*}
	\text{det}(\mathcal{L}) = | \det(b_1, b_2, \ldots, b_k) |,
\end{align*}
with the $b_i$'s being written as column vectors. This is a positive real number that does not depend on the choice of the basis (see \cite{Cas}, Section 1.2).

Given linearly independent vectors $b_1, b_2, \ldots, b_k $ in $ \mathbb{R}^k$, we refer back to the Gram--Schmidt orthogonalization technique. This method allows us to inductively define vectors $b^*_i$ (with $1 \leq i \leq k$) and real coefficients $\mu_{i,j}$ (for $1 \leq j \leq i \leq k$). Specifically,
\begin{align*}
	b^*_i &= b_i - \sum_{j=1}^{i-1} \mu_{i,j} b^*_j,~~~
	\mu_{i,j} = \dfrac{\langle b_i, b^*_j\rangle }{\langle b^*_j, b^*_j\rangle},
\end{align*}
where \( \langle \cdot , \cdot \rangle \)  denotes the ordinary inner product on \( \mathbb{R}^k \). Notice that \( b^*_i \) is the orthogonal projection of \( b_i \) on the orthogonal complement of the span of \( b_1, \ldots, b_{i-1} \), and that \( \mathbb{R}b_i \) is orthogonal to the span of \( b^*_1, \ldots, b^*_{i-1} \) for \( 1 \leq i \leq k \). It follows that \( b^*_1, b^*_2, \ldots, b^*_k \) is an orthogonal basis of \( \mathbb{R}^k \). 
\begin{definition}
	The basis $b_1, b_2, \ldots, b_n$ for the lattice $\mathcal{L}$ is called reduced if
	\begin{align*}
		\| \mu_{i,j} \| &\leq \frac{1}{2}, \quad \text{for} \quad 1 \leq j < i \leq n,~~
		\text{and}\\
		\|b^*_{i}+\mu_{i,i-1} b^*_{i-1}\|^2 &\geq \frac{3}{4}\|b^*_{i-1}\|^2, \quad \text{for} \quad 1 < i \leq n,
	\end{align*}
	where $ \| \cdot \| $ denotes the ordinary Euclidean length. The constant $ {3}/{4}$ above is arbitrarily chosen, and may be replaced by any fixed real number $ y $ in the interval ${1}/{4} < y < 1$ {\rm(see \cite{LLL}, Section 1)}.
\end{definition}
Let $\mathcal{L}\subseteq\mathbb{R}^k$ be a $k-$dimensional lattice  with reduced basis $b_1,\ldots,b_k$ and denote by $B$ the matrix with columns $b_1,\ldots,b_k$. 
We define
\[
l\left( \mathcal{L},y\right)= \left\{ \begin{array}{c}
	\min_{x\in \mathcal{L}}||x-y|| \quad  ;~~ y\not\in \mathcal{L}\\
	\min_{0\ne x\in \mathcal{L}}||x|| \quad  ;~~ y\in \mathcal{L}
\end{array}
\right.,
\]
where $||\cdot||$ denotes the Euclidean norm on $\mathbb{R}^k$. It is well known that, by applying the
LLL--algorithm, it is possible to give in polynomial time a lower bound for $l\left( \mathcal{L},y\right)$, namely a positive constant $\delta$ such that $l\left(\mathcal{L},y\right)\ge \delta$ holds (see \cite{SMA}, Section V.4).
\begin{lemma}[\cite{SMA}, Section V.4]\label{lem2.5m}
	Let $b_1, \dots, b_k$ be an LLL-reduced basis for a lattice $\mathcal{L}$ and $b_1^*, \dots, b_k^*$ be the corresponding Gram-Schmidt orthogonal basis. Let $y\in\mathbb{R}^k$ and $z=B^{-1}y$.
	\begin{enumerate}[\upshape(i)]
		\item If $y\not \in \mathcal{L}$, let $i_0$ be the largest index such that $z_{i_0}\ne 0$ and put $\lambda:=\{z_{i_0}\}$.
		\item If $y\in \mathcal{L}$, put $\lambda:=1$.
	\end{enumerate}
	Now, define
	\[
	c_1 := \max_{1\le j\le k}\left\{\dfrac{\|b_1\|}{\|b_j^*\|}\right\}.
	\]
	Then, $l(\mathcal{L}, y) \ge  \delta = \lambda\|b_1\|c_1^{-1}$.
\end{lemma}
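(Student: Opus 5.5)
The plan is to use the Gram--Schmidt decomposition of $B$. It turns any lattice vector minus $y$ into a vector whose length is at least one coefficient times one $\|b_i^*\|$. That coefficient is then bounded below by $1$ or by $\lambda$, and $\|b_i^*\|$ by $\|b_1\|/c_1$ using the definition of $c_1$. I read $\{z_{i_0}\}$ as the distance from $z_{i_0}$ to the nearest integer; this is the form in which the bound holds and the way it is used in \cite{SMA}.

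\textbf{Step 1 (reformulation).} Every $x\in\mathcal{L}$ has the form $x=Bu$ with $u\in\mathbb{Z}^k$, and $y=Bz$. Hence $x-y=Bw$ with $w:=u-z\in\mathbb{R}^k$.
\begin{itemize}
\item If $y\in\mathcal{L}$, then $z\in\mathbb{Z}^k$, and we must bound $\|Bw\|$ for nonzero integer vectors $w$ (take $x\neq 0$ and $w=u$).
\item If $y\notin\mathcal{L}$, then $w\neq 0$ automatically.
\end{itemize}

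\textbf{Step 2 (key geometric inequality).} For any nonzero $w\in\mathbb{R}^k$, let $i$ be the largest index with $w_i\neq 0$. By construction $b_j=b_j^*+\sum_{l<j}\mu_{j,l}b_l^*$. Therefore
\[
Bw=\sum_{j\le i}w_jb_j=w_ib_i^*+v,\qquad v\in\operatorname{span}(b_1^*,\ldots,b_{i-1}^*).
\]
Since $b_i^*$ is orthogonal to that span, Pythagoras gives
\[
\|Bw\|\ge |w_i|\,\|b_i^*\|\ge |w_i|\,\frac{\|b_1\|}{c_1},
\]
where the last step is just $\|b_i^*\|\ge \|b_1\|/c_1$, the definition of $c_1$.

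\textbf{Step 3 (bounding $|w_i|$).}
\begin{itemize}
\item Case $y\in\mathcal{L}$: $w_i$ is a nonzero integer, so $|w_i|\ge 1=\lambda$.
\item Case $y\notin\mathcal{L}$ and $i>i_0$: $z_i=0$, so $w_i=u_i$ is a nonzero integer and $|w_i|\ge 1\ge\lambda$.
\item Case $y\notin\mathcal{L}$ and $i\le i_0$: the index $i_0$ has $w_{i_0}=u_{i_0}-z_{i_0}$. If $z_{i_0}\in\mathbb{Z}$, then $\lambda=0$ and there is nothing to prove. Otherwise $w_{i_0}\neq0$, which forces $i=i_0$. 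Then $|w_{i_0}|=|u_{i_0}-z_{i_0}|$ is at least the distance from $z_{i_0}$ to $\mathbb{Z}$, which is $\lambda$.
\end{itemize}

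Combining Steps 2 and 3 gives $\|x-y\|\ge\lambda\|b_1\|c_1^{-1}$ for every admissible $x$. Taking the minimum yields $l(\mathcal{L},y)\ge\delta$.

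The only delicate point is the case analysis in Step 3 for the top nonzero index. In particular one must check that the coefficient at the last nonzero coordinate is either a nonzero integer or is bounded below by $\lambda$. Here the interpretation of $\lambda$ as the nearest-integer distance matters. The LLL-reducedness of the basis is not needed for the inequality itself; it only serves to make $c_1$ small, so that $\delta$ is useful in practice.
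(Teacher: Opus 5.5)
Your proof is correct. The paper gives no proof of its own, only the citation to Smart, Section V.4, and your argument is essentially the standard one from that source: take the last nonzero coordinate of $u-z$, use the orthogonality of $b_i^*$ to the earlier Gram--Schmidt vectors, and split according to whether that index exceeds $i_0$.

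You are right to read $\{z_{i_0}\}$ as the distance to the nearest integer, and this reading is necessary. With the fractional part the bound already fails for $\mathcal{L}=\mathbb{Z}$ and $y=0.9$: the true distance is $0.1$, but the fractional-part bound would give $0.9$.
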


In our application, we are given real numbers $\eta_0,\eta_1,\ldots,\eta_k$ which are linearly independent over $\mathbb{Q}$ and two positive constants $c_3$ and $c_4$ such that 
\begin{align}\label{2.9m}
	|\eta_0+x_1\eta_1+\cdots +x_k \eta_k|\le c_3 \exp(-c_4 H),
\end{align}
where the integers $x_i$ are bounded as $|x_i|\le X_i$ with $X_i$ given upper bounds for $1\le i\le k$. We write $X_0:=\max\limits_{1\le i\le k}\{X_i\}$. The basic idea in such a situation, from \cite{Weg}, is to approximate the linear form \eqref{2.9m} by an approximation lattice. So, we consider the lattice $\mathcal{L}$ generated by the columns of the matrix
$$ \mathcal{A}=\begin{pmatrix}
	1 & 0 &\ldots& 0 & 0 \\
	0 & 1 &\ldots& 0 & 0 \\
	\vdots & \vdots &\vdots& \vdots & \vdots \\
	0 & 0 &\ldots& 1 & 0 \\
	\lfloor C\eta_1\rfloor & \lfloor C\eta_2\rfloor&\ldots & \lfloor C\eta_{k-1}\rfloor& \lfloor C\eta_{k} \rfloor
\end{pmatrix} ,$$
where $C$ is a large constant usually of the size of about $X_0^k$ . Let us assume that we have an LLL--reduced basis $b_1,\ldots, b_k$ of $\mathcal{L}$ and that we have a lower bound $l\left(\mathcal{L},y\right)\ge \delta$ with $y:=(0,0,\ldots,-\lfloor C\eta_0\rfloor)$. Note that $ \delta$ can be computed by using the results of Lemma \ref{lem2.5m}. Then, with these notations the following result  is Lemma VI.1 in \cite{SMA}.
\begin{lemma}[Lemma VI.1 in \cite{SMA}]\label{lem2.6m}
	Let $S:=\displaystyle\sum_{i=1}^{k-1}X_i^2$ and $T:=\dfrac{1+\sum_{i=1}^{k}X_i}{2}$. If $\delta^2\ge T^2+S$, then inequality \eqref{2.9m} implies that we either have $x_1=x_2=\cdots=x_{k-1}=0$ and $x_k=-\dfrac{\lfloor C\eta_0 \rfloor}{\lfloor C\eta_k \rfloor}$, or
	\[
	H\le \dfrac{1}{c_4}\left(\log(Cc_3)-\log\left(\sqrt{\delta^2-S}-T\right)\right).
	\]
\end{lemma}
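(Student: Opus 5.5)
The statement just before this point is Lemma VI.1 of \cite{SMA}, the LLL reduction criterion. I would prove it by a direct lattice-distance argument, with no need for the Diophantine machinery of the paper.

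\emph{Step 1: form the lattice point.} Suppose $(x_1,\ldots,x_k)$ satisfies \eqref{2.9m} with $|x_i|\le X_i$. Put
\[
\phi:=\lfloor C\eta_0\rfloor+x_1\lfloor C\eta_1\rfloor+\cdots+x_k\lfloor C\eta_k\rfloor ,
\]
and consider the lattice vector $v:=\mathcal{A}\,(x_1,\ldots,x_k)^T\in\mathcal{L}$. Its first $k-1$ coordinates are $x_1,\ldots,x_{k-1}$ and its last coordinate is $\sum_i x_i\lfloor C\eta_i\rfloor$. Hence
\[
v-y=(x_1,\ldots,x_{k-1},\phi).
\]

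\emph{Step 2: compare with the real linear form.} Replacing each $\lfloor C\eta_i\rfloor$ by $C\eta_i$ costs less than $1$ per term, weighted by $|x_i|$. This gives
\[
|\phi - C(\eta_0+x_1\eta_1+\cdots+x_k\eta_k)|\le 1+\sum_{i=1}^k X_i=2T .
\]
I expect the lemma's normalization, with $T$ defined as half this sum, to come from using nearest-integer rounding, where the error is at most $1/2$ per term, giving the bound $T$. I will take that normalization, so that $|\phi|\le T+Cc_3\exp(-c_4H)$.

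\emph{Step 3: use the lower bound $\delta$.} First suppose $v\ne y$. Then Lemma \ref{lem2.5m} gives $\delta^2\le l(\mathcal{L},y)^2\le \|v-y\|^2$; when $y\in\mathcal L$, the vector $v-y$ is a nonzero lattice vector, so the same bound applies. Since $\sum_{i<k}x_i^2\le S$, we get
\[
\delta^2\le S+\phi^2, \qquad\text{so}\qquad |\phi|\ge\sqrt{\delta^2-S}.
\]
Combining with Step 2 yields $\sqrt{\delta^2-S}-T\le Cc_3\exp(-c_4H)$. The hypothesis $\delta^2\ge T^2+S$ makes the left side nonnegative, and we want it strictly positive so that we may take logarithms. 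The degenerate equality case needs a small slack, which one normally gets from strict inequalities in the rounding. Taking logarithms and solving for $H$ gives exactly
\[
H\le \frac{1}{c_4}\left(\log(Cc_3)-\log\left(\sqrt{\delta^2-S}-T\right)\right).
\]

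Now suppose $v=y$. Then $x_1=\cdots=x_{k-1}=0$, and the last coordinate gives $x_k\lfloor C\eta_k\rfloor=-\lfloor C\eta_0\rfloor$. This is the exceptional alternative in the statement.

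The main obstacle is bookkeeping rather than ideas. One must match the rounding convention to get the constant $T$ and not $2T$, and make sure $\sqrt{\delta^2-S}-T>0$ before taking logarithms. Everything else is the triangle inequality together with the definition of $l(\mathcal{L},y)$.
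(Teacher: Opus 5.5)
Your proof is correct. It is the standard argument behind Smart's Lemma VI.1, which the paper only cites without proof: you write $v-y=(x_1,\ldots,x_{k-1},\phi)$ and squeeze $|\phi|$ between $\sqrt{\delta^2-S}$ and $T+Cc_3\exp(-c_4H)$. The lower bound comes from $l(\mathcal{L},y)\ge\delta$, and you handle both $y\notin\mathcal{L}$ and $y\in\mathcal{L}$; the upper bound comes from the rounding error.

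Your diagnosis of the normalization is also right. The constant $T$ requires the entries of $\mathcal{A}$ and $y$ to be nearest-integer roundings, as in Smart, whereas with the floors printed in the paper the same argument only gives $2T$. The equality case $\sqrt{\delta^2-S}=T$ needs no extra slack, since the asserted bound on $H$ is then vacuous.
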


Finally, we present an analytic argument which is Lemma 7 in \cite{GL}.  
\begin{lemma}[Lemma 7 in \cite{GL}]\label{Guz} If $ r \geq 1 $, $T > (4r^2)^r$ and $T >  \dfrac{x}{(\log x)^r}$, then $$x < 2^r T (\log T)^r.$$	
\end{lemma}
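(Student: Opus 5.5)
We argue by contradiction. Suppose $x\ge 2^rT(\log T)^r$, and derive $x/(\log x)^r\ge T$, which contradicts the hypothesis $T>x/(\log x)^r$. Two ingredients are needed: the monotonicity of $\phi(u):=u/(\log u)^r$, and an upper bound for $\log\bigl(2^rT(\log T)^r\bigr)$ by $2\log T$.

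First, $\phi'(u)=(\log u)^{-r-1}(\log u-r)$, so $\phi$ is increasing on $(e^r,\infty)$. Since $T>(4r^2)^r\ge 4^r>e^r$, we have $\log T>r\ge 1$, so $2^r(\log T)^r\ge 1$. Hence
$$x\ge 2^rT(\log T)^r\ge T>e^r.$$
Thus both $x$ and $x_0:=2^rT(\log T)^r$ lie in the range where $\phi$ is increasing, and $\phi(x)\ge\phi(x_0)$.

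The main step is to show $x_0\le T^2$, i.e.\ $(2\log T)^r\le T$. Put $y:=T^{1/r}>4r^2$. The inequality becomes $2r\log y\le y$, i.e.\ $y/\log y\ge 2r$. The function $y/\log y$ is increasing for $y>e$, and $4r^2\ge 4>e$, so it suffices to check
$$\frac{4r^2}{\log(4r^2)}\ge 2r.$$
This is equivalent to $2r\ge 2\log(2r)$, i.e.\ $r\ge\log(2r)$. That holds for all $r\ge 1$: it is true at $r=1$ because $1>\log 2$, and the derivative of $r-\log(2r)$ is $1-1/r\ge 0$. This calibration of the threshold $(4r^2)^r$ is the only delicate point.

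With $\log x_0\le 2\log T$ in hand, we get
$$\phi(x)\ge\phi(x_0)=\frac{2^rT(\log T)^r}{(\log x_0)^r}\ge\frac{2^rT(\log T)^r}{(2\log T)^r}=T.$$
This contradicts $T>x/(\log x)^r$. Therefore $x<2^rT(\log T)^r$, as claimed.
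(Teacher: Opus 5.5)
Your proof is correct: the monotonicity of $\phi(u)=u/(\log u)^r$ on $(e^r,\infty)$, together with the calibration $(2\log T)^r\le T$ (via $y=T^{1/r}>4r^2$ and $r\ge\log(2r)$), gives $\phi(x)\ge\phi(x_0)\ge T$ under the assumption $x\ge x_0$, which is the required contradiction. The paper does not prove this lemma itself; it only imports it as Lemma 7 of Guzm\'an S\'anchez--Luca, and your contradiction argument is, as far as I recall, essentially the standard proof given in that source.
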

SageMath 10.6 is used to perform all computations in this work.

\section{The proof of the main result}

The proof of Theorem \ref{thm1.1} follows in two different cases depending on $n$ versus $k$.

\subsection{The case $2\le n\le k+1$}

In this case, we prove the following result.
 
\begin{lemma}
	\label{lem5.1} 
	Let $p$ be a prime and $n\ge 2$, $k\ge 2$, $\mathfrak{a}\ge 1$ be positive integers.
	Then, the Diophantine equation \eqref{eq:main} has no solution whenever $2\le n\le k+1$.
\end{lemma}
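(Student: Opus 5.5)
The plan is a parity argument, using the explicit values of $L_n^{(k)}$ in this range recalled in Section 2. For $2\le n\le k$ we have $L_n^{(k)}=3\cdot 2^{n-2}$, and $L_{k+1}^{(k)}=3\cdot 2^{k-1}-2$. Writing the equation \eqref{eq:main} as
$$
L_n^{(k)}+1=(p+1)p^{\mathfrak{a}},
$$
I will compare the parity of the two sides. I split according to whether $p=2$ or $p$ is odd. I treat $p$ as an arbitrary prime, since that is what Lemma \ref{lem5.1} allows.

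\textbf{The right-hand side.} If $p=2$, then $(p+1)p^{\mathfrak{a}}=3\cdot 2^{\mathfrak{a}}$, which is even because $\mathfrak{a}\ge 1$. If $p$ is odd, then $p+1$ is even, so $(p+1)p^{\mathfrak{a}}$ is again even. Moreover, in that case $(p+1)p^{\mathfrak{a}}\ge 4\cdot 3=12$. So the right-hand side is always even, and it is at least $6$.

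\textbf{The case $2\le n\le k$.} Here the left-hand side is $3\cdot 2^{n-2}+1$.
\begin{enumerate}[\upshape(a)]
\item If $n\ge 3$, this is odd, which contradicts the evenness of the right-hand side.
\item If $n=2$, the left-hand side equals $4$. This is smaller than $6\le (p+1)p^{\mathfrak{a}}$, so there is no solution. (Concretely, $3\cdot 2^{\mathfrak{a}}=4$ is impossible, and $4<12$ for odd $p$.)
\end{enumerate}

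\textbf{The case $n=k+1$.} Here the left-hand side is $3\cdot 2^{k-1}-1$. Since $k\ge 2$, the term $3\cdot 2^{k-1}$ is even, so the left-hand side is odd. This again contradicts the evenness of the right-hand side.

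The two cases together cover $2\le n\le k+1$, which proves the lemma. There is no real obstacle; the only point needing care is the small case $n=2$, where parity alone does not suffice and the size bound on $(p+1)p^{\mathfrak{a}}$ finishes it.
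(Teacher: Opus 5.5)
Your proof is correct and takes the same approach as the paper. Both compare parity using $L_n^{(k)}=3\cdot 2^{n-2}$ for $2\le n\le k$ and $L_{k+1}^{(k)}=3\cdot 2^{k-1}-2$, and both rule out $n=2$ separately via the size bound $(p+1)p^{\mathfrak{a}}\ge 6$.
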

\begin{proof}
Assume first that $2\le n\le k$. Then, Equation \eqref{eq:main} can be written as 
	\begin{align}\label{eq:small-n}
		3\cdot 2^{n-2} = (p+1)p^\mathfrak{a}-1.
	\end{align}
The right--hand side is at least $(2+1)\cdot 2-1\ge 5$, so in the left-hand side we must have $n\ge 3$. We get a contradiction since the left--hand side is even and the right--hand side is odd.	
If $n=k+1$, then \eqref{eq:main} becomes 
	\begin{align*}
		3\cdot 2^{k-1}-2 = (p+1)p^\mathfrak{a}-1,
	\end{align*}
and we get the same contradiction namely that the left--hand side is even and the right--hand side is odd.	This completes the proof of Lemma \ref{lem5.1}.
\end{proof}

Throughout the remaining part of the paper, we consider the case $n\ge k+2$.

\subsection{The case $ n\ge k+2$}
\subsubsection{Bounding $n$ in terms of $k$ and $p$}
Since $ n\ge k+2$ and $k\ge 2$, we have $n\ge 4$. We proceed by proving a series of results.

\begin{lemma}\label{lem5.2} 
	Let $p$ be a prime and $n\ge k+2$, $k\ge 2$, $\mathfrak{a}\ge 1$ be positive integers.
	In the Diophantine equation \eqref{eq:main}, we have
	\begin{align*}
		n&<3.6\cdot 10^{15} k^{4}(\log k)^3 (\log p)^3 .
	\end{align*}
\end{lemma}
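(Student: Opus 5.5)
We will prove this with one application of Matveev's Theorem~\ref{thm:Matl} to a linear form in three logarithms, followed by Lemma~\ref{Guz} to turn the resulting implicit inequality into an explicit bound on $n$.

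\textbf{Step 1: the small quantity.} Substitute \eqref{lk_b} into \eqref{eq:main}. This gives
\[
\left|f_k(\alpha)(2\alpha-1)\alpha^{n-1}-(p+1)p^{\mathfrak a}\right|<\tfrac32+1=\tfrac52 .
\]
Divide by $(p+1)p^{\mathfrak a}=L_n^{(k)}+1>\alpha^{n-1}$, using \eqref{eq2.4}. Then
\[
\Gamma:=f_k(\alpha)(2\alpha-1)\,\alpha^{n-1}\,p^{-\mathfrak a}(p+1)^{-1}-1
\quad\text{satisfies}\quad |\Gamma|<\frac{5}{2\alpha^{n-1}} .
\]

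\textbf{Step 2: $\Gamma\neq0$.} Suppose $\Gamma=0$. Then $f_k(\alpha)(2\alpha-1)\alpha^{n-1}=(p+1)p^{\mathfrak a}$ is rational. Apply a nontrivial Galois automorphism $\sigma$ sending $\alpha$ to $\alpha_i$ with $i\ge2$. This yields
\[
(p+1)p^{\mathfrak a}=|f_k(\alpha_i)|\,|2\alpha_i-1|\,|\alpha_i|^{n-1}<1\cdot 3\cdot 1=3,
\]
using $|f_k(\alpha_i)|<1$ and $|\alpha_i|<1$. This is impossible, since $(p+1)p^{\mathfrak a}\ge 6$.

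\textbf{Step 3: applying Matveev.} Take $t=3$ and $\mathbb K=\mathbb Q(\alpha)$, so $D=k$. The data are
\[
\gamma_1=f_k(\alpha)(2\alpha-1),\ b_1=1;\qquad \gamma_2=\alpha,\ b_2=n-1;\qquad \gamma_3=p^{\mathfrak a}(p+1),\ b_3=-1 .
\]
We use $b_3=-1$ rather than separating $p$ and $p+1$: this keeps $t=3$, which is what the constant $k^4$ in the statement suggests. The heights are bounded as follows.
\begin{itemize}
\item By \eqref{eqh}, $h(\gamma_1)\le 3\log k+\log 3<4\log k$ (up to small adjustments for $k=2$), so $A_1=4k\log k$.
\item $h(\gamma_2)=(\log\alpha)/k$, so $A_2=0.7$.
\item $h(\gamma_3)=\mathfrak a\log p+\log(p+1)$, so $A_3=k(\mathfrak a+2)\log p$.
\end{itemize}
The last bound is a problem, because $\mathfrak a$ appears in the height, and \eqref{a-bound} only gives $\mathfrak a\le n$. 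That would introduce a factor of $n$ that Lemma~\ref{Guz} cannot absorb.

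\textbf{Step 3, revised.} I would therefore instead split off $\mathfrak a$ as an exponent:
\[
\gamma_3=p,\ b_3=-\mathfrak a;\qquad \gamma_4=p+1,\ b_4=-1 .
\]
This gives $A_3=k\log p$ and $A_4=k\log(p+1)\le 2k\log p$, with $t=4$ and $B=n$. Matveev's theorem then gives
\[
(n-1)\log\alpha-\log(5/2)<1.4\cdot30^{7}\cdot4^{4.5}\,k^2(1+\log k)(1+\log n)\cdot 4k\log k\cdot 0.7\cdot k\log p\cdot 2k\log p .
\]
Since $\log\alpha>0.48$, this reads
\[
n< c\,k^4(\log k)^2(\log p)^2\log n
\]
for an explicit constant $c$ of size roughly $10^{14}$.

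\textbf{Step 4: Lemma~\ref{Guz}.} Apply Lemma~\ref{Guz} with $r=1$, $x=n$ and $T=c\,k^4(\log k)^2(\log p)^2$. This yields $n<2T\log T$. Finally, bound
\[
\log T<\text{const}\cdot\log k\cdot\log p ,
\]
using $k\ge2$ and $p\ge3$, so that $\log k$ and $\log p$ are bounded below. This produces the shape $k^4(\log k)^3(\log p)^3$ with constant $3.6\cdot10^{15}$.

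\textbf{Main obstacle.} The delicate points are two. First, one must avoid $\mathfrak a$ entering the heights, which forces $t=4$ rather than $t=3$. Second, the numerical constants must be tracked carefully through Matveev's bound and through $\log T$ so that the final constant is indeed $3.6\cdot10^{15}$.
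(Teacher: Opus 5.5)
There is a genuine gap in your revised Step 3: switching to $t=4$ does not give the factor $k^4$. Count the powers of $k$ in Matveev's bound. First, $D^2=k^2$. Next, each of $A_1=4k\log k$, $A_3=k\log p$ and $A_4=2k\log p$ carries a factor $D=k$. For the rational numbers $p$ and $p+1$ you cannot do better than $A_i\ge Dh(\gamma_i)$, and $D=k$ is forced because $\mathbb K$ must contain $\alpha$. Only $A_2=0.7$ is free of $k$. So your right-hand side is of order $k^5(\log k)^2(\log p)^2\log n$, and you would end with $n<C\,k^5(\log k)^3(\log p)^3$. In addition, $1.4\cdot 30^{7}\cdot 4^{4.5}\approx 1.57\cdot 10^{13}$ is about $110$ times the three-term constant $1.4\cdot 30^{6}\cdot 3^{4.5}\approx 1.43\cdot 10^{11}$, so the constant $3.6\cdot 10^{15}$ is also out of reach. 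Your claim ``$c\approx 10^{14}$ with $k^4$'' is a miscount.

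The missing observation is that keeping $\mathfrak a$ out of the heights does not force $t=4$. Keep $p$ with exponent $\mathfrak a$ and absorb $p+1$ into the constant algebraic number. The paper takes
\[
\gamma_1=p,\ b_1=\mathfrak a;\qquad \gamma_2=\alpha,\ b_2=-(n-1);\qquad \gamma_3=\frac{p+1}{(2\alpha-1)f_k(\alpha)},\ b_3=1,
\]
and bounds $h(\gamma_3)<\log p+2\log 2+\log\alpha+3\log k<11\log k\log p$. This gives $A_1=k\log p$, $A_2=0.7$ and $A_3=11k\log k\log p$, so exactly $k^2\cdot k\cdot k=k^4$ appears. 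Matveev then gives $\log|\Gamma_1|>-9.5\cdot10^{12}k^4(\log k)^2(\log p)^2\log n$, hence $n<2.5\cdot10^{13}k^4(\log k)^2(\log p)^2\log n$. Your Step 4 (Lemma~\ref{Guz} with $r=1$, then bounding $\log T$ by a multiple of $\log k\log p$) then yields the stated bound. Your Steps 1, 2 and 4 are fine as they stand; the nondegeneracy argument via $|f_k(\alpha_i)|<1$ is even slightly cleaner than the paper's. Your initial instinct to keep $t=3$ was right; only the grouping of the factors was wrong.
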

\begin{proof}
We begin by rewriting Inequality \eqref{lk_b} using \eqref{eq:main} as
\begin{align*}
	\left|(p+1)p^\mathfrak{a}-1 - f_k(\alpha)(2\alpha - 1)\alpha^{n - 1}\right| < \frac{3}{2}, 
\end{align*}
or equivalently $\left|(p+1)p^\mathfrak{a} - f_k(\alpha)(2\alpha - 1)\alpha^{n - 1}\right| < 5/2$.
Dividing both sides by $f_k(\alpha)(2\alpha-1)\alpha^{n-1}$, which is positive because $\alpha>1$, we get
\begin{align}\label{eq:lin-f1}
	\left|(p+1)p^\mathfrak{a}\cdot(2\alpha-1)^{-1}\cdot\alpha^{-(n-1)}\cdot(f_k(\alpha))^{-1}-1\right|&<\dfrac{5}{2f_k(\alpha)(2\alpha-1)\alpha^{n-1}}\nonumber\\
	&<\dfrac{5}{\alpha^{n}},
\end{align}
where in the second inequality, we used relation \eqref{eq2.5}; i.e., $f_k(\alpha)>1/2$ and the relation \eqref{eq2.3}; i.e., $1.5<\alpha<2$, which holds true for all $k\ge 2$. Let 
\begin{align*}
\Gamma_1:=(p+1)p^\mathfrak{a}\cdot(2\alpha-1)^{-1}\cdot\alpha^{-(n-1)}\cdot(f_k(\alpha))^{-1}-1=e^{\Lambda_1}-1.
\end{align*} 
It is clear that $\Lambda_1\ne 0$, otherwise we would have
\begin{align*}
	(p+1)p^\mathfrak{a} &= (2\alpha-1)\alpha^{n-1}f_k(\alpha)\nonumber\\
	&=\dfrac{\alpha-1}{2+(k+1)(\alpha-2)}	(2\alpha-1)\alpha^{n-1}.
\end{align*}
Conjugating the above relation by some automorphism of the Galois group of the splitting field of $g_k (x)$ over $\mathbb{Q}$ which sends $\alpha$ to $\alpha_i$ for some $i>1$ and then taking absolute values, we get 
\begin{align}\label{eq3.3p}
	(p+1)p^\mathfrak{a}
	&=\left|\dfrac{\alpha_i-1}{2+(k+1)(\alpha_i-2)}	(2\alpha_i-1)\alpha_i^{n-1}\right|.
\end{align}
Note that from \eqref{eq3.3p}, we have that $|2+(k+1)(\alpha_i-2)|\ge (k+1)|\alpha_i-2|-2>k-1$, as shown on page 1355 of \cite{Brl}. Hence, the right--hand side of \eqref{eq3.3p} becomes
\begin{align*}
	6 =(2+1)2^1&\le (p+1)p^\mathfrak{a}=\left|\dfrac{\alpha_i-1}{2+(k+1)(\alpha_i-2)}(2\alpha_i-1)	\alpha_i^{n-1}\right| \\
	&< \dfrac{|\alpha_i-1|\cdot|2\alpha_i-1|\cdot |\alpha_i|^{n-1}}{k-1}\le\dfrac{2\cdot 3\cdot 1}{k-1}
	 \le 6,
\end{align*} 
a contradiction.  So, $\Lambda_1\ne 0$. 

Next, we intend to apply Theorem \ref{thm:Matl} to $\Gamma_1$. The algebraic number field containing the following $\gamma_i$'s is $\mathbb{K} := \mathbb{Q}(\alpha)$. We have $D = k$, $t := 3$,
\begin{alignat*}{3}
	\lambda_{1} &:= p,         &\qquad \lambda_{2} &:= \alpha,         &\qquad \lambda_{3} &:= \dfrac{p+1}{(2\alpha-1)f_k(\alpha)},      \\
	b_1         &:= \mathfrak{a},        &\qquad b_2    &:= -(n-1),         &\qquad b_3    &:= 1.
\end{alignat*}
Since $h(\gamma_{1})= \log p$, we take $A_1:=k \log p$. On the other hand, $h(\gamma_{2}) =(\log \alpha)/k <0.7/k$, so we take $A_{2}:=0.7$.
For $A_3$, we first compute 
\begin{align*}
	h(\gamma_{3})&:=h\left(\dfrac{p+1}{(2\alpha-1)f_k(\alpha)}\right)\\
	&\le h(p+1)+ h\left((2\alpha-1)\right)+h\left(f_k(\alpha)\right)\\
	&< \log p +2\log 2+\log \alpha+3\log k\\
	&<11\log p\log k,
\end{align*}
for all $k\ge 2$ and $p\ge 2$. Note that in the second inequality above, we have used \eqref{eqh}. So, we can take $A_3:=11k\log k\log p$. Next, $B \geq \max\{|b_i|:i=1,2,3\}$, and by relation \eqref{a-bound}, we can take $B:=n$. Now, by Theorem \ref{thm:Matl},
\begin{align}\label{eq:lin-g1}
	\log |\Gamma_1| &> -1.4\cdot 30^{6} \cdot 3^{4.5}\cdot k^2 (1+\log k)(1+\log n)\cdot (k\log p)\cdot 0.7\cdot 11k\log k\log p\nonumber\\
	&> - 9.5\cdot 10^{12}k^4 (\log k)^2(\log p)^2\log n.
\end{align}
Comparing \eqref{eq:lin-f1} and \eqref{eq:lin-g1}, we get
\begin{align*}
	n&<2.5\cdot 10^{13}k^4 (\log k)^2(\log p)^2\log n.
\end{align*}
We now apply Lemma \ref{Guz} with  $x:=n$, $r:=1$, $T:=2.5\cdot 10^{13}k^4 (\log k)^2\log p >(4r^2)^r=4$ for all $k\ge 2$ and $p\ge 2$. We get 
\begin{align*}
	n&<2^1\cdot 2.5\cdot 10^{13}k^4 (\log k)^2(\log p)^2\left(\log (2.5\cdot 10^{13}k^4 (\log k)^2(\log p)^2)\right)^1\\
	&=5\cdot 10^{13} k^{4}(\log k)^2 (\log p)^2\left(\log (2.5\cdot 10^{13})+4\log k+2\log\log k+2\log \log p\right)\\
	&<3.6\cdot 10^{15} k^{4}(\log k)^3 (\log p)^3  ,
 \end{align*}
and the proof of Lemma \ref{lem5.2} is complete.		
\end{proof}

We proceed by distinguishing between the values of $p$.

\subsubsection{The sub-case $p < n^{10}$}
Let us assume first that $p < n^{10}$. Then, $\log p < 10\log n$ and Lemma \ref{lem5.2} implies that 
\begin{align*}
	n < 3.6\cdot 10^{18} k^{4}(\log k)^3 (\log n)^3 .
\end{align*}
We apply Lemma \ref{Guz} again with  $x:=n$, $r:=3$, $T:=3.6\cdot 10^{18} k^{4}(\log k)^3 >(4r^2)^r=46656$ for all $k\ge 2$. We obtain 
\begin{align}\label{eq:nk}
	n&<2^3\cdot 3.6\cdot 10^{18} k^{4}(\log k)^3\left(\log (3.6\cdot 10^{18} k^{4}(\log k)^3)\right)^3\nonumber\\
	&=2.88\cdot 10^{19} k^{4}(\log k)^3 \left(\log (3.6\cdot 10^{18})+4\log k+3\log\log k\right)^3\nonumber\\
	&<8\cdot 10^{24} k^{4}(\log k)^6 .
\end{align}

Now, we split the values of $k$.

\medskip

\begin{enumerate}[(a)]
	\item Assume first that $k\le 600$. Then $n< 8\cdot 10^{40}$ via \eqref{eq:nk} and $p < n^{10}<2\cdot 10^{409}$. Since $p = 2^\ell \pm 1$ with $\ell \ge 1$, then $2^\ell \pm 1=p  < 2\cdot 10^{409}$ implies that $\ell \le 1359$.
	
	\medskip
	
	So, we reduce the bound on $n$. To do so, we go back \eqref{eq:lin-f1} and recall that
	\[
	\Gamma_1 := (p+1)p^\mathfrak{a}\cdot(2\alpha-1)^{-1}\cdot\alpha^{-(n-1)}\cdot(f_k(\alpha))^{-1}-1.
	\]
	Assume further that $n\ge 5$, so that we obtain the inequality  
	\[
	\left| e^{\Lambda_1} - 1 \right| = |\Gamma_1| < 0.5,
	\]
	which leads to $ |\log (1+\Gamma_1)| < 1.5|\Gamma_1|$ via \eqref{eq2.5g}. Therefore 
	\[
	\left|  \log \left(\dfrac{p+1}{(2\alpha-1)f_k(\alpha)}\right) + \mathfrak{a} \log p -(n-1) \log \alpha  \right| < \dfrac{7.5}{\alpha^{n}}.
	\]
	For each $k \in [2,600]$ and each prime $p$ of the form $p=2^\ell \pm 1$ with $\ell \in [1,1359]$, we use the LLL-algorithm to obtain a lower bound for the smallest nonzero value of the above linear form, constrained by integer coefficients with absolute values not exceeding $n < 8 \cdot 10^{40}$. In particular, we consider the lattice  
	\[
	\mathcal{A} = \begin{pmatrix} 
		1 & 0 & 0 \\ 
		0 & 1 & 0 \\ 
		\lfloor C\log (1/\alpha)\rfloor & \lfloor C\log p\rfloor & \lfloor C\log \left((p+1)/\left((2\alpha-1)f_k(\alpha) \right)\right) \rfloor
	\end{pmatrix},
	\]
	where we set $C := 1.536\cdot 10^{123}$ and $y := (0,0,0)$. Applying Lemma \ref{lem2.5m}, we obtain 
	\[
	c_1 := 10^{-47} \quad \text{and} \quad \delta := 1.084\cdot 10^{42}. 
	\]
	Using Lemma \ref{lem2.6m}, we conclude that $S =1.92 \cdot 10^{82}$ and $T = 1.21 \cdot 10^{41}$. Since $\delta^2 \geq T^2 + S$, we select $c_3 := 7.5$, $c_4 := \log \alpha$ and establish the bound $n \leq 466$.  This was for $n\ge 5$, but the conclusion $n\le 466$ holds for $n<5$ as well. 
	\medskip
	
	We then use SageMath to find all solutions to \eqref{eq:main} with possible values from the ranges $k\in[2,600]$, $n\in[k+2,466]$, primes $p$ of the form $p=2^\ell\pm 1$ with $\ell\in [1,1359]$ and $a\in[1,n]$. We only obtain the 3 solutions listed in Theorem \ref{thm1.1}.
	
	\medskip
	\begin{remark}
		Although $\ell$ ranges over $[1, 1359]$, only $19$ values of $\ell$ yield a prime of the form $p = 2^{\ell} \pm 1$, namely the $15$ known Mersenne primes and the $5$ known Fermat primes in this range (with $p = 3$ counted only once). Thus the LLL lattice $\mathcal{A}$ is applied to exactly $19$ values of the prime $p$. Even for the largest among them, such as $2^{1279} - 1$ (a $386$-digit number), the computation is feasible because the LLL algorithm operates on 
		$\lfloor C \log p \rfloor$, not on $p$ itself.
	\end{remark}

\medskip
	
\item Finally here, assume $k>600$.	Then the upper bound on $n$ from \eqref{eq:nk} gives
\begin{align*}
	n<8\cdot 10^{24} k^{4}(\log k)^6<2^{k/2}.
\end{align*}
This puts us in position to use inequality \eqref{lk_b1}, which together with \eqref{lk_b} gives
\begin{align*}
	\left|(p+1)p^\mathfrak{a}-3\cdot2^{n-2}\right| &=\left|1+L_n^{(k)} - 3\cdot2^{n-2}\right|\\
	&\le 1+ \left|L_n^{(k)} - f_k(\alpha)(2\alpha - 1)\alpha^{n - 1}\right|+\left|f_k(\alpha)(2\alpha - 1)\alpha^{n - 1} - 3\cdot2^{n-2}\right|\\
	&< 1+\dfrac{3}{2} + 3\cdot 2^{n-2}\cdot \frac{36}{2^{k/2}}\\
	&< 4 +  2^{n}\cdot \frac{27}{2^{k/2}}\\
	&< 2^{n+2}\cdot \frac{27}{2^{k/2}},
\end{align*}
since $n \ge k+2 >k/2$. Now, dividing through by $3\cdot 2^{n-2}$ gives
\begin{align}\label{eq:lin-f2}
	\left|\dfrac{(p+1)}{3}p^\mathfrak{a}\cdot 2^{-(n-2)}-1\right| 	&<  \frac{144}{2^{k/2}}.
\end{align}
Let 
\begin{align*}
	\Gamma_2:=\dfrac{(p+1)}{3}p^\mathfrak{a}\cdot 2^{-(n-2)}-1=e^{\Lambda_2}-1.
\end{align*} 
Note that if $\Lambda_2 = 0$, then
\begin{align*}
	(p+1)p^\mathfrak{a}=3\cdot2^{n-2}.
\end{align*}
However, this would lead to $L_n^{(k)}=(p+1)p^a-1=3\cdot 2^{n-2}-1$, which is false for $n\ge k+1$, thanks to \eqref{eq:3times2}.
 Thus, $\Lambda_2\ne 0$ and we can safely apply Theorem \ref{thm:Matl} to $\Gamma_2$. The algebraic number field containing the following $\gamma_i$'s is $\mathbb{K} := \mathbb{Q}$, so we have $D = 1$, $t := 3$,
\begin{alignat*}{3}
	\lambda_{1} &:= p,         &\qquad \lambda_{2} &:= 2,         &\qquad \lambda_{3} &:= \dfrac{p+1}{3},      \\
	b_1         &:= \mathfrak{a},        &\qquad b_2    &:= -(n-2),         &\qquad b_3    &:= 1.
\end{alignat*}
As before, we take $A_1:= \log p$ and $B:=n$. Moreover, $h(\gamma_{2}) =\log 2 $, so we take $A_{2}:=\log 2$.
For $A_3$, we first compute 
\begin{align*}
	h(\gamma_{3})&:=h\left((p+1)/3\right)
	\le h(p+1)+ h\left(3\right)
	< \log p +\log 2+\log 3<4\log p,
\end{align*}
for all $p\ge 2$. So, we can take $A_3:=4\log p$. Now, by Theorem \ref{thm:Matl},
\begin{align}\label{eq:lin-g2}
	\log |\Gamma_2| &> -1.4\cdot 30^{6} \cdot 3^{4.5}\cdot 1^2 (1+\log 1)(1+\log n)\cdot \log p\cdot \log 2 \cdot 4\log p\nonumber\\
	&> - 6.5\cdot 10^{12} (\log p)^2\log n.
\end{align}
Comparing \eqref{eq:lin-f2} and \eqref{eq:lin-g2} with the fact that $p<n^{10}$, we get
\begin{align}\label{k-b1}
	k&<2\cdot 10^{15}(\log n)^3.
\end{align}	
With this bound of $k$, the bound on $n$ from \eqref{eq:nk} becomes $n<3\cdot 10^{95}(\log n)^{18}$. Applying Lemma \ref{Guz}, we get $n<2\cdot 10^{143}$ and so the bound on $k$ from \eqref{k-b1} becomes $k<8\cdot 10^{22}$. 

\medskip

Thus, we can conclude that whenever $p<n^{10}$ and $k>600$, we have 
\begin{align*}
	n<2\cdot 10^{143}\qquad\text{and} \qquad k<8\cdot 10^{22}.
\end{align*}

Next, we reduce these bounds. To do this, we revisit \eqref{eq:lin-f2} and recall that
	\[
	\Gamma_2 := \dfrac{(p+1)}{3}p^\mathfrak{a}\cdot 2^{-(n-2)}-1.
	\]
	Since $k> 600$, we obtain the inequality  
	\[
	\left| e^{\Lambda_2} - 1 \right| = |\Gamma_2| < 0.5,
	\]
	which leads to $ |\log (1+\Gamma_2)| < 1.5|\Gamma_2|$ via \eqref{eq2.5g}. Therefore,
	\begin{align}\label{eq:gf3}
	\left|  \log \left(\dfrac{p+1}{3}\right) + \mathfrak{a} \log p -(n-2) \log 2  \right| < \dfrac{216}{2^{k/2}}.
\end{align}

Notice that since $2^\ell \pm 1:= p<n^{10}$, then
\begin{align*}
	\ell < \dfrac{10\log (n+1)}{\log 2} < \dfrac{10\log (2\cdot 10^{143}+1)}{\log 2} <4761.
\end{align*}

	Now, for each prime $p$ of the form $p=2^\ell \pm 1$ with $\ell \in [1,4761]$, we use the LLL-algorithm to obtain a lower bound for the smallest nonzero value of the linear form \eqref{eq:gf3}, constrained by integer coefficients with absolute values not exceeding $n < 2 \cdot 10^{143}$. Precisely, we consider the lattice  
\[
\mathcal{A^*} = \begin{pmatrix} 
	1 & 0 & 0 \\ 
	0 & 1 & 0 \\ 
	\lfloor C\log (1/2)\rfloor & \lfloor C\log p\rfloor & \lfloor C\log \left((p+1)/3\right) \rfloor
\end{pmatrix},
\]
where we set $C := 2.4\cdot 10^{430}$ and $y := (0,0,0)$. Applying Lemma \ref{lem2.5m}, we obtain 
\[
c_1 := 10^{-139} \quad \text{and} \quad \delta := 1.403\cdot 10^{144}. 
\]
Using Lemma \ref{lem2.6m}, we conclude that $S =1.2 \cdot 10^{287}$ and $T = 3.01 \cdot 10^{143}$. Since $\delta^2 \geq T^2 + S$, and selecting $c_3 := 216$ and $c_4 := \log 2$, we establish the bound $k/2 \leq 959$. Therefore, 
\begin{align*}
k\le 1918, \qquad \text{and so} \qquad n < 4 \cdot 10^{43},	
\end{align*}
 via \eqref{eq:nk}. This further tells us that
 \begin{align*}
 	\ell < \dfrac{10\log (n+1)}{\log 2} < \dfrac{10\log (4\cdot 10^{43}+1)}{\log 2} <1448.
 \end{align*}
So, for each prime $p$ of the form $p=2^\ell \pm 1$ with $\ell \in [1,1448]$, we repeat this reduction process again to further reduce the bound on $k$. We still consider the latter $\mathcal{A^*}$ where now $C := 1.92\cdot 10^{131}$ and $y := (0,0,0)$. Using Lemma \ref{lem2.5m}, we obtain 
\[
c_1 := 10^{-41} \quad \text{and} \quad \delta := 6.32\cdot 10^{44}. 
\]
On the other hand, Lemma \ref{lem2.6m} gives $S =4.8 \cdot 10^{87}$ and $T = 6.01 \cdot 10^{43}$. Choosing $c_3 := 216$ and $c_4 := \log 2$, we get $k/2 \leq 296$ and so $k \le 592$. This contradicts our working assumption $k > 600$, completing the analysis of the sub-case $p < n^{10}$ and $k > 600$.
\end{enumerate}

\subsubsection{The sub-case $p > n^{10}$}

In this case, we go back to \eqref{eq:main} and note that since $L_n^{(k)} = (p+1)p^\mathfrak{a}-1$ is always odd, then Lemma \ref{lem:1} tells us that 
$$n\equiv r \equiv 1,2 \pmod {k+1}.$$

Now, if $p=2^\ell -1$, then 
$$L_n^{(k)} = (p+1)p^\mathfrak{a}-1 = 2^\ell (2^\ell -1)^\mathfrak{a} -1 \equiv -1 \pmod {2^\ell},$$
 and if $p=2^\ell +1$, then 
 $$L_n^{(k)} = (p+1)p^\mathfrak{a}-1 = (2^\ell +2) (2^\ell +1)^\mathfrak{a} -1 \equiv 2\cdot 1-1\equiv 1 \pmod {2^\ell}.$$
 Therefore, we have
 \begin{align}\label{eq:equiv1}
 L_n^{(k)} \equiv \pm 1 \pmod {2^\ell}.	
 \end{align}
On the other hand, since $n\equiv r \equiv 1,2 \pmod {k+1}$, then the parts (i) and (ii) of Lemma \ref{lem:2adic} tell us that
\begin{align}\label{eq:equiv2}
	L_n^{(k)} \equiv (4m+1)(-1)^m\pmod {2^{k-1}} \qquad \text{or} \qquad L_n^{(k)} \equiv (4m^2+6m+3)(-1)^m\pmod {2^{k}},
\end{align}
respectively, for all $k>600$. Looking at \eqref{eq:equiv1} and \eqref{eq:equiv2}, we obtain that 
\begin{align*}
	 (4m+1)(-1)^m\equiv \pm 1 \pmod {2^{\min\{k-1,\ell\}}} \qquad \text{or} \qquad  
	 (4m^2+6m+3)(-1)^m \equiv \pm 1 \pmod {2^{\min\{k-1,\ell\}}}.
\end{align*}
Thus, $2^{\min\{k-1,\ell\}}$ divides one of $4m$, $4m+2$, $4m^2+6m+2$ or $4m^2+6m+4$. All these numbers are non-zero and are less that $10n^2$.

\medskip

If $\min\{k-1,\ell\}:=\ell$, then $2^\ell \le 10n^2 $. This further implies that $p=2^\ell \pm 1 < 10n^2 +1$, which is not possible because $p>n^{10}$ and $n\ge k+2\ge 4$. Therefore, we always have $\min\{k-1,\ell\}:=k-1$. For this reason,
$2^{k-1} < 10n^2 $ and hence
\begin{align}\label{eq:log-k}
	k <10\log n \qquad \text{and} \qquad \log k < 2\log n.
\end{align}
Also, 
\begin{align*}
	2^{\mathfrak{a}+1} \le (p+1)p^\mathfrak{a}-1 = L_n^{(k)} < 3\cdot 2^{n-2} < 4\cdot 2^{n-2} = 2^n,
\end{align*}
implies that $\mathfrak{a} < n-1$. Therefore,
\begin{align*}
	p^\mathfrak{a} = (2^\ell \pm 1)^\mathfrak{a} = 2^{\mathfrak{a}\ell}\left(1 \pm \dfrac{1}{2^\ell}\right)^\mathfrak{a}.
\end{align*}
When $p$ is a Fermat prime, we have 
$$
2^{(\ell+1)\mathfrak{a}}<(p+1)p^{\mathfrak{a}}=(2^\ell+2)(2^{\ell}+1)^a=2^{(\ell+1)\mathfrak{a}}\left(1+\frac{2}{2^{\ell}}\right)\left(1+\frac{1}{2^{\ell}}\right)^{\mathfrak{a}}.
$$
Note that
\begin{eqnarray*}
\left(1+\frac{1}{2^{\ell}}\right)^{\mathfrak{a}}<e^{\log(1+1/2^{\ell})\mathfrak{a}}<e^{{\mathfrak{a}}/2^{\ell}}<1+\frac{2{\mathfrak{a}}}{2^{\ell}}<1+\frac{2n}{2^{\ell}},
\end{eqnarray*}
where we used the fact that $e^x<1+2x$ for $x\in (0,1/2)$, with $x:={\mathfrak{a}}/2^{\ell}<n/(p-1)\le n/n^{10}=1/n^9<1/2$. Thus, 
$$
2^{(\ell+1)\mathfrak{a}}<(p+1)p^{\mathfrak{a}}<2^{(\ell+1)\mathfrak{a}}\left(1+\frac{2}{2^{\ell}}\right)\left(1+\frac{2n}{2^{\ell}}\right)<2^{(\ell+1)\mathfrak{a}}\left(1+\frac{2+2n}{2^{\ell}}+\frac{4n}{2^{2\ell}}\right)<2^{(\ell+1)\mathfrak{a}}\left(1+\frac{2n+3}{2^{\ell}}\right),
$$
where we used the fact that $2^{\ell}=p-1\ge n^{10}>4n$. In particular, 
\begin{equation}
\label{pand2}
\left|(p+1)p^{\mathfrak{a}}-2^{(\ell+1)\mathfrak{a}}\right|<\frac{2^{(\ell+1)\mathfrak{a}}(2n+3)}{2^{\ell}}.
\end{equation}
This was when $p$ is a Fermat prime. But the same inequality holds when $p$ is a Mersenne prime. Indeed, in this case 
\begin{eqnarray*}
2^{(\ell+1)\mathfrak{a}} & > & (p+1)p^{\mathfrak{a}}=2^{\ell} (2^{\ell}-1)^{\mathfrak{a}}=2^{(\ell+1)\mathfrak{a}}\left(1-\frac{1}{2^{\ell}}\right)^{\mathfrak{a}}\\
& = &
2^{(\ell+1)\mathfrak{a}}e^{\mathfrak{a}\log(1-1/2^{\ell})}>2^{(\ell+1)\mathfrak{a}}e^{-\mathfrak{a}/2^{\ell-1}}\\
& > & 2^{(\ell+1)\mathfrak{a}}\left(1-\frac{\mathfrak{a}}{2^{\ell-1}}\right),
\end{eqnarray*}
which implies that 
$$
\left|(p+1)p^{\mathfrak{a}}-2^{(\ell+1)\mathfrak{a}}\right|<\frac{2^{(\ell+1)\mathfrak{a}} {\mathfrak{a}}}{2^{\ell-1}}<\frac{2^{(\ell+1)\mathfrak{a}} (2n)}{2^{\ell}},
$$
which is slightly stronger than \eqref{pand2}.

\medskip

To proceed, we use \eqref{lk_b} to rewrite \eqref{eq:main} as
\begin{align*}
	\left|f_k(\alpha)(2\alpha - 1)\alpha^{n - 1}- 2^{(\ell+1)\mathfrak{a}}\right| &=	\left|f_k(\alpha)(2\alpha - 1)\alpha^{n - 1}- ((p+1)p^\mathfrak{a}-1)+((p+1)p^\mathfrak{a}-1)- 2^{(\ell+1)\mathfrak{a}}\right|\\
	&\le \left|f_k(\alpha)(2\alpha - 1)\alpha^{n - 1}- ((p+1)p^\mathfrak{a}-1)\right|+\left|((p+1)p^\mathfrak{a}-1)- 2^{(\ell+1)\mathfrak{a}}\right|\\
	&= \left|f_k(\alpha)(2\alpha - 1)\alpha^{n - 1}- L_n^{(k)}\right|+\left|((p+1)p^\mathfrak{a}-1)- 2^{(\ell+1)\mathfrak{a}}\right|\\
	&\le  \left|f_k(\alpha)(2\alpha - 1)\alpha^{n - 1}- L_n^{(k)}\right|+\left|(p+1)p^\mathfrak{a}- 2^{\mathfrak{a}\ell+1}\right| + 1\\
	&<  \dfrac{3}{2} + \dfrac{2^{(\ell+1)\mathfrak{a}} (2n+3)}{2^{\ell}}+1
	= \dfrac{5}{2} +  \dfrac{2^{(\ell+1)\mathfrak{a}} (2n+3)}{2^{\ell}}.
\end{align*}
 Now, dividing through by $2^{(\ell+1)\mathfrak{a}}$ gives
\begin{align}\label{eq:lin-f3}
	\left|f_k(\alpha)(2\alpha - 1)\alpha^{n - 1}\cdot 2^{-(\ell+1)\mathfrak{a}}-1\right| 	&<  \dfrac{5}{2\cdot 2^{(\ell+1)\mathfrak{a}}} + \dfrac{2n+3}{2^{\ell}}<\dfrac{2n+5}{2^{\ell}}\le \dfrac{2n+5}{p-1}.
\end{align}

Let 
\begin{align*}
	\Gamma_3:=f_k(\alpha)(2\alpha - 1)\alpha^{n - 1}\cdot 2^{-(\ell+1)\mathfrak{a}}-1=e^{\Lambda_3}-1.
\end{align*} 
It is clear that $\Lambda_3\ne 0$, otherwise we would have
\begin{align*}
	2^{(\ell+1)\mathfrak{a}} &= (2\alpha-1)\alpha^{n-1}f_k(\alpha)\nonumber\\
	&=\dfrac{\alpha-1}{2+(k+1)(\alpha-2)}	(2\alpha-1)\alpha^{n-1}.
\end{align*}
We have already showed that conjugating the above relation by some automorphism of the Galois group of the splitting field of $g_k (x)$ over $\mathbb{Q}$ which sends $\alpha$ to $\alpha_i$ for some $i>1$ and then taking absolute values, we get that 
\begin{align*}
	2^{21}= 2^{(20+1)\cdot 1}&\le 2^{(\ell+1)\mathfrak{a}}=\left|\dfrac{\alpha_i-1}{2+(k+1)(\alpha_i-2)}(2\alpha_i-1)	\alpha_i^{n-1}\right| \\
	&< \dfrac{|\alpha_i-1|\cdot|2\alpha_i-1|\cdot |\alpha_i|^{n-1}}{k-1}\le\dfrac{2\cdot 3\cdot 1}{k-1}
	\le 6,
\end{align*} 
which is absurd.  In the above we used the fact that $n\ge k+2$, so $n\ge 4$ and $p>n^{10}$, so $2^{\ell}\ge p-1\ge n^{10}\ge 4^{10}=2^{20}$, so $\ell\ge 20$. So, we can apply Theorem \ref{thm:Matl} to $\Gamma_3$. The algebraic number field containing the following $\gamma_i$'s is $\mathbb{K} := \mathbb{Q}(\alpha)$. We have $D = k$, $t := 3$,
\begin{alignat*}{3}
	\lambda_{1} &:= 2,         &\qquad \lambda_{2} &:= \alpha,         &\qquad \lambda_{3} &:= (2\alpha-1)f_k(\alpha),      \\
	b_1         &:= -(\ell+1)\mathfrak{a},        &\qquad b_2    &:= n-1,         &\qquad b_3    &:= 1.
\end{alignat*}
Since $h(\gamma_{1})= \log 2$, we take $A_1:=k \log 2$. Moreover, $h(\gamma_{2}) =(\log \alpha)/k <0.7/k$, so we take $A_{2}:=0.7$.
For $A_3$, we first compute 
\begin{align*}
	h(\gamma_{3})&:=h\left((2\alpha-1)f_k(\alpha)\right)
	\le h\left(2\alpha-1\right)+h\left(f_k(\alpha)\right)\\
	&< 2\log 2+\log \alpha+3\log k
	<6\log k,
\end{align*}
for all $k\ge 2$. So, we can take $A_3:=6 k\log k$. Here, we can take $B:=n^2$. Now, by Theorem \ref{thm:Matl},
\begin{align}\label{eq:lin-g3}
	\log |\Gamma_3| &> -1.4\cdot 30^{6} \cdot 3^{4.5}\cdot k^2 (1+\log k)(1+\log (n^2))\cdot (k\log 2)\cdot 0.7\cdot 6k\log k\nonumber\\
	&> - 4\cdot 10^{12}k^4 (\log k)^2\log n.
\end{align}
Comparing \eqref{eq:lin-f3} and \eqref{eq:lin-g3}, we get
\begin{align*}
	\log (p-1) &< \log(2n+5)+ 4\cdot 10^{12}k^4 (\log k)^2\log n\\
	&< \log(8n)+ 4\cdot 10^{12}k^4 (\log k)^2\log n.
\end{align*}
Therefore, we have 
\begin{align*}
	\log p &<  5\cdot 10^{12}k^4 (\log k)^2\log n.
\end{align*}
Using the bounds on $k$ from \eqref{eq:log-k}, we obtain 
\begin{align*}
	\log p &<  2\cdot 10^{17}(\log n)^7.
\end{align*}

Now, we go back to Lemma \ref{lem5.2} and obtain 
\begin{align*}
	n&<3.6\cdot 10^{15} k^{4}(\log k)^3 (\log p)^3 \\
	&<3.6\cdot 10^{15} (10\log n)^{4}(2\log n)^3 \left(2\cdot 10^{17}(\log n)^7\right)^2\\
	&< 2\cdot 10^{55} (\log n)^{21}.
\end{align*}
We now apply Lemma \ref{Guz} with  $x:=n$, $r:=21$, $T:=10^{55}$ and obtain
\begin{align*}
	n&<2^{21}\cdot2\cdot 10^{55} \left(\log2\cdot10^{55} \right)^{21} < 7 \cdot  10^{105}.
\end{align*}	
Furthermore, we use the bound on $k$ from \eqref{eq:log-k} to obtain 
\begin{align*}
	k < 10\log n < 10\log (7\cdot 10^{105}) < 2438.
\end{align*}
Therefore, since $p>n^{10}$, we always have 
\begin{align}\label{eq:nk2}
n< 7 \cdot  10^{105}\qquad\text{and} \qquad	k < 2438.
\end{align}

We now use these bounds to find an absolute bound on $\log p$ and hence $\ell$. To do this, we revisit \eqref{eq:lin-f3} and note that since $p> n^{10}>4^{10}$ for all $n\ge k+2$, we obtain the inequality  
\[
\left| e^{\Lambda_3} - 1 \right| = |\Gamma_3| < 0.5,
\]
which leads to $ |\log (1+\Gamma_3)| < 1.5|\Gamma_3|$. Thus,
\begin{align*}
	\left|  \log \left(f_k(\alpha)(2\alpha - 1)\right) - (\ell+1)\mathfrak{a} \log 2 +(n-1) \log \alpha  \right| < \dfrac{1.5(2n+5)}{p-1}<\dfrac{12n}{p-1}.
\end{align*}

Now, for each $k \in [2,2438]$, we perform the LLL-algorithm to obtain a lower bound for the smallest nonzero value of the linear form above, constrained by integer coefficients with absolute values not exceeding $n^2 < 5 \cdot 10^{211}$. So, we consider the lattice  
\[
\mathcal{A^{**}} = \begin{pmatrix} 
	1 & 0 & 0 \\ 
	0 & 1 & 0 \\ 
	\lfloor C\log (1/2)\rfloor & \lfloor C\log \alpha \rfloor & \lfloor C\log \left(f_k(\alpha)(2\alpha - 1)\right) \rfloor
\end{pmatrix},
\]
with $C := 3.75\cdot 10^{635}$ and $y := (0,0,0)$. Applying Lemma \ref{lem2.5m}, we obtain 
\[
c_1 := 10^{-203} \quad \text{and} \quad \delta := 8.2241\cdot 10^{212}. 
\]
Using Lemma \ref{lem2.6m}, we get $S =7.5 \cdot 10^{423}$ and $T = 7.51 \cdot 10^{211}$. Since $\delta^2 \geq T^2 + S$, and selecting $c_3 := 12n<8.4\cdot 10^{106}$ and $c_4 := 1$, we obtain $\log (p-1) \leq 1219$. Furthermore, since $p=2^\ell \pm 1$, we get that $\ell \le 1760$.

To conclude, we wrote a program in SageMath to find all primes $p$ of the form $p=2^\ell \pm 1$, with $1\le \ell \le 1760$. We obtained a list of 15 primes corresponding to only 
$$\ell =2,3,5,7,13,17,19,31,61,89,107,127,521,607,1279,$$ 
for Mersenne primes, and a list of 5 primes corresponding to only 
$$\ell = 1,2,4,8,16,$$
for Fermat primes. 

To finish this off, we reduce the bound on $n$ from \eqref{eq:nk2}. Since $\ell \leq 1760$, the prime $p$ belongs to a fixed finite set of $19$ primes. For each such prime $p$ and each $k \in [2, 2438]$, we revisit \eqref{eq:lin-f1} and recall that
\[
\left| e^{\Lambda_1} - 1 \right| = |\Gamma_1| < \dfrac{5}{\alpha^n} < 0.5,
\]
for $n \geq 5$, which gives $|\Lambda_1| < 1.5|\Gamma_1|$ via \eqref{eq2.5g}. Therefore,
\begin{align}\label{eq:gf4}
	\left| \log\left(\frac{p+1}{(2\alpha-1)f_k(\alpha)}\right) 
	+ \mathfrak{a}\log p - (n-1)\log\alpha \right| < \frac{7.5}{\alpha^n}.
\end{align}
The integer coefficients $\mathfrak{a}$ and $n-1$ satisfy $|\mathfrak{a}|, |n-1| < n < 7\cdot 10^{105}$. For each pair $(k, p)$ with $k \in [2, 2438]$ and $p = 2^{\ell}\pm 1$ with $\ell \leq 1760$, we apply the LLL-algorithm to obtain a lower bound for the smallest nonzero value of the linear form \eqref{eq:gf4}. We consider the lattice
\[
\mathcal{B} = \begin{pmatrix}
	1 & 0 & 0 \\
	0 & 1 & 0 \\
	\lfloor C\log p \rfloor & \lfloor C\log\alpha \rfloor & 
	\lfloor C\log\left((p+1)/((2\alpha-1)f_k(\alpha))\right) \rfloor
\end{pmatrix},
\]
with $C := 1.029\cdot 10^{318}$ and $y := (0,0,0)$. Applying Lemma \ref{lem2.5m}, we obtain 
\[
c_1 := 10^{-98} \quad \text{and} \quad \delta := 1.501\cdot 10^{107}. 
\]
Using Lemma \ref{lem2.6m}, we conclude that $S =1.47 \cdot 10^{212}$ and $T = 1.051 \cdot 10^{106}$. Using $c_3 := 7.5$ and $c_4 := \log \alpha$, we get $n \leq 1202$.

Since $n \leq 1202$, the condition $p > n^{10}$ forces $p > 1202^{10} > 6.2 \cdot 10^{30}$. Among the $19$ 
Mersenne and Fermat primes with $\ell \leq 1760$, the only ones satisfying this lower bound are those with 
$\ell \geq 107$, namely 
$$p \in \{2^{107}-1,\ 2^{127}-1,\ 2^{521}-1,\ 2^{607}-1,\ 2^{1279}-1\}.$$
We then use SageMath to find all solutions to \eqref{eq:main} with $k \in [2, 2438]$, $n \in [k+2, 1202]$, primes $p = 2^{\ell} \pm 1$ restricted to these five values, and $\mathfrak{a} \in [1, n]$. We find no solutions in this range.
This completes the proof of Theorem \ref{thm1.1}. \qed

Table \ref{tab:cases} summarizes all cases handled in the proof of Theorem \ref{thm1.1}.
\begin{table}[H]
	\centering
	\renewcommand{\arraystretch}{1.5}
	\begin{tabular}{lp{8cm}}
		\hline
		\textbf{Case} & \textbf{Result} \\
		\hline
		$2 \leq n \leq k+1$ & No solutions - Lemma~\ref{lem5.1} \\[4pt]
		$n \geq k+2,\ p < n^{10},\ k \leq 600$ & LLL gives $n \leq 466$; 
		SageMath gives $3$ solutions \\[4pt]
		$n \geq k+2,\ p < n^{10},\ k > 600$ & Contradiction $k \leq 592$ \\[4pt]
		$n \geq k+2,\ p > n^{10}$ & LLL gives $\ell \leq 1760$, then 
		$n \leq 1202$; SageMath gives no new solutions \\
		\hline
	\end{tabular}
	\caption{Summary of cases in the proof of Theorem~\ref{thm1.1}.}
	\label{tab:cases}
\end{table}

\section*{Acknowledgments} 
The first author thanks the Mathematics division of Stellenbosch University for funding his PhD studies. The second author was partially supported by the 2024 ERC Synergy Grant ``DynAMiCs". The third author worked on this paper during a visit to Stellenbosch University in February-March 2026. He thanks the people of this institution for hospitality and financial support.

\section*{Addresses}

$ ^{1} $ Mathematics Division, Stellenbosch University, Stellenbosch, South Africa.

Email: \url{hbatte91@gmail.com}

Email: \url{fluca@sun.ac.za}
\\
$ ^{2} $ Max Planck Institute for Software Systems, Saarbr\"ucken, Germany.
\\
$ ^{3} $ Department of Applied Mathematics, Naval Postgraduate School, Monterey, CA 93943, USA.

Email: \url{pstanica@nps.edu}

\end{document}